\documentclass[11pt,leqno]{article}
\usepackage[margin=1in]{geometry} 
\usepackage{amssymb,amsfonts,amsmath,bbm,mathrsfs,stmaryrd,mathtools}
\usepackage{xcolor}
\usepackage{url}

\usepackage{graphicx}

\usepackage{accents}

\usepackage{extarrows}

\usepackage[shortlabels]{enumitem}
\usepackage{tensor}

\usepackage{xr}
\usepackage[T1]{fontenc}
\usepackage[utf8]{inputenc}

\usepackage[colorlinks,
linkcolor=black!75!red,
citecolor=blue,
pdftitle={},
pdfproducer={pdfLaTeX},
pdfpagemode=None,
bookmarksopen=true,
bookmarksnumbered=true,
backref=page]{hyperref}

\usepackage{tikz}
\usetikzlibrary{arrows,calc,decorations.pathreplacing,decorations.markings,decorations.shapes,intersections,shapes.geometric,through,fit,shapes.symbols,positioning,decorations.pathmorphing}

\makeatletter
\newlength\zig@L
\newlength\zig@La
\newlength\zig@Lb

\newcommand{\xzigrightarrow}[2][]{%
  \mathrel{%
    \settowidth{\zig@La}{$\scriptstyle #2$}%
    \settowidth{\zig@Lb}{$\scriptstyle #1$}%
    \zig@L=\zig@La\relax
    \ifdim\zig@Lb>\zig@L \zig@L=\zig@Lb\fi
    \advance\zig@L by 2.2em\relax
    \tikz[baseline=-0.65ex]{%
      \draw[->,
            line cap=round,
            decorate,
            decoration={zigzag,segment length=4pt,amplitude=1.1pt}]%
        (0,0) -- (\zig@L,0)
        node[midway,above=2pt] {$\scriptstyle #2$}%
        \if\relax\detokenize{#1}\relax\else
          node[midway,below=2pt] {$\scriptstyle #1$}%
        \fi
      ;
    }%
  }%
}
\makeatother

\makeatletter
\newcommand{\squigjoin}{1mu} 

\def\sqleft@{\sim}                    
\def\sqmid@{\sim\mkern-\squigjoin}    

\def\rightsquigarrowfill@{%
  \arrowfill@{\sqleft@}{\sqmid@}{\mkern-4mu\succ}%
}

\newcommand{\xrightsquigarrow}[2][]{%
  \ext@arrow 0359\rightsquigarrowfill@{#1}{#2}%
}
\makeatother

\makeatletter
\newcommand*\circled[1]{\tikz[baseline=(char.base)]{
    \node[shape=circle, draw, inner sep=0pt, 
    minimum height={\f@size},] (char) {\vphantom{WAH1g}#1};}}
\makeatother

\makeatletter
\DeclareRobustCommand\widecheck[1]{{\mathpalette\@widecheck{#1}}}
\def\@widecheck#1#2{%
    \setbox\z@\hbox{\m@th$#1#2$}%
    \setbox\tw@\hbox{\m@th$#1%
       \widehat{%
          \vrule\@width\z@\@height\ht\z@
          \vrule\@height\z@\@width\wd\z@}$}%
    \dp\tw@-\ht\z@
    \@tempdima\ht\z@ \advance\@tempdima2\ht\tw@ \divide\@tempdima\thr@@
    \setbox\tw@\hbox{%
       \raise\@tempdima\hbox{\scalebox{1}[-1]{\lower\@tempdima\box
\tw@}}}%
    {\ooalign{\box\tw@ \cr \box\z@}}}
\makeatother

\usepackage{braket}

\usepackage[amsmath,thmmarks,hyperref]{ntheorem}
\usepackage{cleveref}

\newcommand\nthalias[1]{\AddToHook{env/#1/begin}{\crefalias{lemma}{#1}}}

\nthalias{definition}
\nthalias{example}
\nthalias{examples}
\nthalias{remark}
\nthalias{remarks}
\nthalias{convention}
\nthalias{notation}
\nthalias{construction}
\nthalias{sketch}
\nthalias{theoremN}
\nthalias{propositionN}
\nthalias{corollaryN}
\nthalias{lemma}
\nthalias{proposition}
\nthalias{corollary}
\nthalias{theorem}
\nthalias{conjecture}
\nthalias{question}
\nthalias{assumption}

\creflabelformat{enumi}{#2#1#3}

\crefname{section}{Section}{Sections}
\crefformat{section}{#2Section~#1#3} 
\Crefformat{section}{#2Section~#1#3} 

\crefname{subsection}{\S}{\S\S}
\AtBeginDocument{%
  \crefformat{subsection}{#2\S#1#3}%
  \Crefformat{subsection}{#2\S#1#3}%
}

\crefname{subsubsection}{\S}{\S\S}
\AtBeginDocument{%
  \crefformat{subsubsection}{#2\S#1#3}%
  \Crefformat{subsubsection}{#2\S#1#3}%
}

\theoremstyle{plain}

\newtheorem{lemma}{Lemma}[section]
\newtheorem{proposition}[lemma]{Proposition}
\newtheorem{corollary}[lemma]{Corollary}
\newtheorem{theorem}[lemma]{Theorem}
\newtheorem{conjecture}[lemma]{Conjecture}

\theoremstyle{plain}
\theoremnumbering{Alph}

\theoremstyle{plain}
\theorembodyfont{\upshape}
\theoremsymbol{\ensuremath{\blacklozenge}}

\newtheorem{remark}[lemma]{Remark}
\newtheorem{remarks}[lemma]{Remarks}

\newtheorem{notation}[lemma]{Notation}

\crefname{definition}{definition}{definitions}
\crefformat{definition}{#2definition~#1#3} 
\Crefformat{definition}{#2Definition~#1#3} 

\crefname{ex}{example}{examples}
\crefformat{example}{#2example~#1#3} 
\Crefformat{example}{#2Example~#1#3} 

\crefname{exs}{example}{examples}
\crefformat{examples}{#2example~#1#3} 
\Crefformat{examples}{#2Example~#1#3} 

\crefname{remark}{remark}{remarks}
\crefformat{remark}{#2remark~#1#3} 
\Crefformat{remark}{#2Remark~#1#3} 

\crefname{remarks}{remark}{remarks}
\crefformat{remarks}{#2remark~#1#3} 
\Crefformat{remarks}{#2Remark~#1#3} 

\crefname{convention}{convention}{conventions}
\crefformat{convention}{#2convention~#1#3} 
\Crefformat{convention}{#2Convention~#1#3} 

\crefname{notation}{notation}{notations}
\crefformat{notation}{#2notation~#1#3} 
\Crefformat{notation}{#2Notation~#1#3} 

\crefname{table}{table}{tables}
\crefformat{table}{#2table~#1#3} 
\Crefformat{table}{#2Table~#1#3}

\crefname{lemma}{lemma}{lemmas}
\crefformat{lemma}{#2lemma~#1#3} 
\Crefformat{lemma}{#2Lemma~#1#3} 

\crefname{proposition}{proposition}{propositions}
\crefformat{proposition}{#2proposition~#1#3} 
\Crefformat{proposition}{#2Proposition~#1#3} 

\crefname{propositionN}{proposition}{propositions}
\crefformat{propositionN}{#2proposition~#1#3} 
\Crefformat{propositionN}{#2Proposition~#1#3} 

\crefname{corollary}{corollary}{corollaries}
\crefformat{corollary}{#2corollary~#1#3} 
\Crefformat{corollary}{#2Corollary~#1#3} 

\crefname{corollaryN}{corollary}{corollaries}
\crefformat{corollaryN}{#2corollary~#1#3} 
\Crefformat{corollaryN}{#2Corollary~#1#3} 

\crefname{theorem}{theorem}{theorems}
\crefformat{theorem}{#2theorem~#1#3} 
\Crefformat{theorem}{#2Theorem~#1#3} 

\crefname{theoremN}{theorem}{theorems}
\crefformat{theoremN}{#2theorem~#1#3} 
\Crefformat{theoremN}{#2Theorem~#1#3} 

\crefname{enumi}{}{}
\crefformat{enumi}{#2#1#3}
\Crefformat{enumi}{#2#1#3}

\crefname{assumption}{assumption}{Assumptions}
\crefformat{assumption}{#2assumption~#1#3} 
\Crefformat{assumption}{#2Assumption~#1#3} 

\crefname{construction}{construction}{Constructions}
\crefformat{construction}{#2construction~#1#3} 
\Crefformat{construction}{#2Construction~#1#3} 

\crefname{sketch}{sketch}{Sketches}
\crefformat{sketch}{#2sketch~#1#3} 
\Crefformat{sketch}{#2Sketch~#1#3} 

\crefname{question}{question}{Questions}
\crefformat{question}{#2question~#1#3} 
\Crefformat{question}{#2Question~#1#3} 

\crefname{equation}{}{}
\crefformat{equation}{(#2#1#3)} 
\Crefformat{equation}{(#2#1#3)}

\numberwithin{equation}{section}

\theoremstyle{nonumberplain}
\theoremsymbol{\ensuremath{\blacksquare}}

\newtheorem{proof}{Proof}
\newcommand\pf[1]{\newtheorem{#1}{Proof of \Cref{#1}}}

\newcommand\bC{{\mathbb C}}

\newcommand\bG{{\mathbb G}}
\newcommand\bH{{\mathbb H}}

\newcommand\bK{{\mathbb K}}

\newcommand\bP{{\mathbb P}}
\newcommand\bQ{{\mathbb Q}}

\newcommand\bS{{\mathbb S}}

\newcommand\bZ{{\mathbb Z}}

\newcommand\cC{{\mathcal C}}

\newcommand\cP{{\mathcal P}}

\newcommand\wh{\widehat}

\DeclareMathOperator{\im}{\mathrm{im}}

\newcommand{\cat}[1]{\textsc{#1}}

\newcommand\spr[1]{\cite[\href{https://stacks.math.columbia.edu/tag/#1}{Tag {#1}}]{stacks-project}}
\newcommand{\qedhere}{\mbox{}\hfill\ensuremath{\blacksquare}}

\newcommand{\comment}[1]{}

\newcommand{\xrightarrowdbl}[2][]{%
  \xrightarrow[#1]{#2}\mathrel{\mkern-14mu}\rightarrow
}

\title{Nullhomotopic equivariant shifts and Borsuk-Ulam-compatible compact abelian groups}
\author{Alexandru Chirvasitu and Alexander D{\k{a}}browski}

\begin{document}

\date{}

\newcommand{\Addresses}{{
  \bigskip
  \footnotesize

  \textsc{Department of Mathematics, University at Buffalo}
  \par\nopagebreak
  \textsc{Buffalo, NY 14260-2900, USA}  
  \par\nopagebreak
  \textit{E-mail address}: \texttt{achirvas@buffalo.edu}

  \medskip
  
  \textit{E-mail address}: \texttt{alex@dabrow.cc}

}}

\maketitle

\begin{abstract}
  We disprove the classical version of the Borsuk-Ulam-flavored conjecture, due to Baum-D{\k{a}}browski-Hajac, that free actions $X\times \mathbb{G}\to X$ on non-empty compact Hausdorff spaces by non-trivial compact groups admit no equivariant maps $X*\mathbb{G}\to X$. Specifically, among non-trivial compact abelian $\mathbb{G}$, the conjecture fails precisely when $\mathbb{G}$ is torsion-free profinite.
\end{abstract}

\noindent \emph{Key words:
  Borsuk-Ulam;
  Chern class;
  compact group;
  equivariant map;
  join;
  principal bundle;
  profinite group;
  torsion
}

\vspace{.5cm}

\noindent{MSC 2020: 22C05; 54H15; 57S10; 57S17; 55R10; 55S35; 55R37; 55R40
  
  
}


\section*{Introduction}

The object of the present note is the purely classical version of \cite[Conjecture 2.3, Type 1]{MR3469159}: writing
\begin{equation*}
  X*Y
  :=
  X\times Y\times [0,1]
  \bigg/\left(
    \begin{gathered}
      (x,y,0)\sim (x',y,0)\\
      (x,y,1)\sim (x,y',1)
    \end{gathered}
  \right)
\end{equation*}
for the \emph{topological join} \cite[\S 14.4.3]{td_alg-top} of two spaces,

\begin{conjecture}[BDH]\label{cj:bdh.t1}
  For a free action $X\circlearrowleft \bG$ by a non-trivial compact (Hausdorff) group on a non-empty compact Hausdorff space, there are no $\bG$-equivariant maps $X*\bG\to X$.  \qedhere
\end{conjecture}

The statement is ``classical'' in the sense that loc. cit. is rather concerned with \emph{quantum} spaces: a possibly non-commutative $C^*$-algebra playing the role of the function algebra $C(X)$ and a \emph{compact quantum group} \cite[Definition 1.1.1]{NeTu13} in place of $\bG$.

The case $X:=\bG^{*n}$ recovers one form of the celebrated \emph{Borsuk-Ulam theorem}: \cite[Theorem 6.2.5]{mat_bu} for finite $\bG$, an unpublished result of Bestvina-Edwards for profinite $\bG$ and \cite[Theorem 1.1]{cdt_be-hs} for arbitrary compact $\bG$. The problem is also adjacent to the Hilbert-Smith conjecture \cite[Conjecture 17.0.2]{tao_h5_2014}, with a weak version thereof reducible (e.g. \cite[\S 4]{cdt_be-hs}) to the $\bG:=\bZ_p$ case ($(\bZ_p,+)$ denoting the $p$-adic integers) of \Cref{cj:bdh.t1}.

The main aim of the current note is to disprove the conjecture in the stated generality: \emph{all} $p$-adic-integer groups $\bZ_p$ provide counterexamples (\Cref{cor:higher.spheres}). Moreover, the compact abelian groups for which the conjecture fails are precisely the torsion-free profinite ones:

\begin{theorem}\label{th:cj1.holds.when}
  The following conditions on a non-trivial compact abelian group $\bG$ are equivalent.
  \begin{enumerate}[(a),wide]
  \item\label{item:th:cj1.holds.when:prod.3sph} $\bG$ admits a free action on a $3$-sphere power $X:=\left(\bS^{3}\right)^{S}$ with an equivariant map $X* \bG\to X$.
    
  \item\label{item:th:cj1.holds.when:prod.sph} $\bG$ admits a free action on a sphere power $X:=\left(\bS^{2n-1}\right)^{S}$, $n\ge 2$ with an equivariant map $X* \bG\to X$.

  \item\label{item:th:cj1.holds.when:cpct} $\bG$ admits a free action on a non-empty compact Hausdorff space $X$ with an equivariant map $X* \bG\to X$. 

  \item\label{item:th:cj1.holds.when:tors.fr} $\bG$ is torsion-free and profinite.

  \item\label{item:th:cj1.holds.when:p.adic} $\bG$ is a product of $p$-adic groups $\bZ_p$ for various primes $p$. 
  \end{enumerate}
\end{theorem}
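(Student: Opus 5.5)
Trivial implications first: (a)$\Rightarrow$(b) (take $n=2$) and (b)$\Rightarrow$(c) are immediate. (d)$\Leftrightarrow$(e) is standard structure theory: a profinite abelian group is the product of its pro-$p$ Sylow components, and a torsion-free abelian pro-$p$ group is Pontryagin dual to a divisible $p$-primary discrete torsion group, i.e.\ a direct sum of Prüfer groups $\bZ(p^\infty)$. Its dual is therefore a product of copies of $\bZ_p$. So the substance is (c)$\Rightarrow$(d) and (e)$\Rightarrow$(a).

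\textbf{(c)$\Rightarrow$(d).} I would argue by contraposition. Suppose $\bG$ is not torsion-free profinite. Then either $\bG$ is not totally disconnected, or it has a nontrivial torsion element. In the first case the identity component is a nontrivial compact connected abelian group, and it surjects onto a circle $\bS^1$. In the second case $\bG$ contains a subgroup $\bZ/p$. An equivariant map $X*\bG\to X$ restricts to an $H$-equivariant map $X*H\to X$ for any closed subgroup $H$, and $X$ stays $H$-free. So the torsion case reduces to finite cyclic $H=\bZ/p$, where the classical Borsuk-Ulam-type result (the Dold/Bestvina-Edwards-type statement, \cite[Theorem 1.1]{cdt_be-hs}) forbids such maps.

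For the connected case I would pass to a torus. A nontrivial compact connected abelian group contains no nontrivial finite subgroups only if it is solenoidal. More simply, I would show directly that free actions of connected $\bG$ with such maps are impossible. Iterating the map gives equivariant $\bG^{*n}\to X$ and $X*\bG^{*n}\to X$. Index or cohomological-dimension arguments (\cite[Theorem 1.1]{cdt_be-hs} for arbitrary compact $\bG$ gives no equivariant $\bG^{*(n+1)}\to\bG^{*n}$) should be combined with the fact that $X$, compact, maps equivariantly into some finite-stage approximation. I expect this direction, especially handling arbitrary compact $X$ rather than joins, to be the main obstacle. The plan is to use a Chern-class/Euler-class obstruction: for $\bG\twoheadrightarrow\bS^1$ or a finite quotient, the class $c\in H^2(X/\bG)$ of the associated bundle would have to vanish after pulling back along the map induced on $(X*\bG)/\bG$. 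That space is a suspension-like construction over $X/\bG$, on which $c$ is forced to be nilpotent of lower order, producing a contradiction with the equivariance.

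\textbf{(e)$\Rightarrow$(a).} Here one constructs the counterexample (the paper's \Cref{cor:higher.spheres}). For $\bG=\prod\bZ_p$ it suffices, by taking products over factors with $S$ a disjoint union, to treat $\bZ_p$, or more uniformly any torsion-free procyclic group, embedded densely in $\bS^1$-powers. Embed $\bG$ as a closed subgroup of a torus $(\bS^1)^S$ via characters; $\bS^1$ acts freely on $\bS^3$ by the Hopf action, so $\bG$ acts freely on $(\bS^3)^S$. An equivariant map $X*\bG\to X$ amounts to a $\bG$-equivariant nullhomotopy of the shift maps $X\times\bG\to X$, $(x,g)\mapsto xg$, relative to the projection. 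Because $\bG$ is totally disconnected and torsion-free, its characters into each circle coordinate can be deformed: equivariantly, $\bG\to\bS^1$ is homotopic through homomorphisms-modulo-path to trivial? Not quite. Instead I would use that $\bS^3=SU(2)$ and rotate within $SU(2)$, using a path in $SU(2)$ from $g$ to $1$ chosen continuously and equivariantly in $g\in\bG$. This is possible because $\bG$ is profinite and torsion-free, so the relevant cohomological obstructions, Chern classes of the $\bG$-bundle over a compact base, vanish rationally and torsion-free-ly. Verifying this obstruction vanishing is the key technical step of this direction.
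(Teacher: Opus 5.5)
Your formal implications, the equivalence (d)$\Leftrightarrow$(e), and the torsion half of (c)$\Rightarrow$(d) are fine. For the torsion half, make explicit that a free action of the finite group $\bZ/p$ on compact $X$ gives an equivariant map $X\to E_N\bZ/p$ (via a finite equivariant partition of unity). Iterating the join map then gives $E_k\bZ/p\to X\to E_N\bZ/p$, which contradicts Borsuk-Ulam for $k>N$.

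\textbf{The non-profinite case.} The genuine gap is the torsion-free, non-profinite case. It reduces to the conjecture for the closed subgroup $\wh{\bQ}\le\bG_0\cong\wh{\bQ}^S$ (\Cref{th:qhat}), and your sketch for it has two defects.
\begin{itemize}
\item The ``fact'' that a compact free $\bG$-space maps equivariantly into some $E_N\bG$ is false for non-Lie $\bG$. The $\bZ_p$-space $X=(\bS^3)^{\bZ_{\ge 0}}$ of \Cref{cor:higher.spheres} admits no such map, since $E_k\bZ_p\to X\to E_N\bZ_p$ would contradict \cite[Theorem 1.1]{cdt_be-hs}.
\item You run the Chern-class obstruction ``for $\bG\twoheadrightarrow\bS^1$ or a finite quotient''. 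Anything that works through finite quotients would equally prove the conjecture for $\bZ_p$, which is false, so your sketch never locates where the circle is used. As written, the height comparison also points the wrong way. Pulling back along $(X*\bG)/\bG\to X/\bG$ can only lower the nilpotency order of $c$. You would need $c$ nilpotent on $X/\bG$ and of strictly larger order on the join side, and you argue neither.
\end{itemize}
The paper instead fixes $\chi_1:\wh{\bQ}\twoheadrightarrow\bS^1$ and uses Ellwood's characterization of freeness to get $\chi_1$-eigenfunctions $f_1,\dots,f_n\in C(X)$ with no common zero. This is the correct substitute for your finite-stage map: it lands in $\bC^n\setminus\{0\}$ through the Lie quotient. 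Iterating the join map, as you suggest, gives a nowhere-vanishing equivariant $E_n\bG\to\chi_1^{\oplus n}$. After passing to an $\bS^1$-equivariant $E_n\bS^1\to E$, this contradicts $c_n=\alpha^n\prod_i m_i\ne 0$ in the torsion-free ring $H^*(\bC\bP^n;\bZ)$. For a cyclic quotient $\bZ/q$, the corresponding top class is torsion. It inflates to $q^n$ times the generator of $H^{2n}(\bZ/q^2;\bZ)\cong\bZ/q^2$, which is $0$ once $n\ge 2$ (cf. \Cref{th:z2p.2.zp}). This $\bS^1$-versus-finite dichotomy is exactly what your plan misses.

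\textbf{The construction (e)$\Rightarrow$(a).} Here you give no construction. A path in $SU(2)$ from $g$ to $1$ ``chosen equivariantly in $g$'' does not yield what is required, namely (by \Cref{pr:eqvr.map.nullhot}) a \emph{nullhomotopic} $\bG$-equivariant self-map of $X$. Vanishing of Chern classes ``rationally and torsion-free-ly'' is not an argument either. Moreover, no map sending each free factor of $(\bS^3)^S$ to itself can work: an equivariant self-map of a free $\bZ/m$-sphere has degree $\equiv 1\pmod m$, so it is not nullhomotopic for $m\ge 2$.

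The missing idea is the level shift of \Cref{th:act.prod.fr}.
\begin{itemize}
\item Write $\bZ_p\cong\varprojlim_i\bZ/p^{2^i}$, and let the $i$-th factor $\bS^3\subset\bC^2$ carry the scalar $\bZ/p^{2^i}$-action.
\item Send factor $i+1$ to factor $i$ via $(r_1u_1,r_2u_2)\mapsto(r_1u_1^q,r_2u_2^q)$ with $q=p^{2^i}$. This map is $(\bZ/q^2,\bZ/q)$-equivariant of degree $q^2$, and can be modified along free $q^2$-orbits to degree $0$.
\item The aggregate $(y_i)_i\mapsto(h_i(y_{i+1}))_i$ is then equivariant and nullhomotopic. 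Products of such examples handle arbitrary $\prod\bZ_p$.
\end{itemize}
Your character-indexed action on $(\bS^3)^S$ could be rescued the same way. Map the $\chi'$-coordinate to the $\chi$-coordinate whenever $\chi'^{\,q}=\chi$ with $q$ the order of $\chi$, and send trivial-character coordinates to a point. But that shift is the substance you would need to supply. It is also where torsion-freeness is used: divisibility of the dual supplies the surjections $\bZ/q^2\twoheadrightarrow\bZ/q$ along the inverse system.
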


The counterexamples consist of actions $\bG$ on products $X:=\prod_i Y_i$, each factor constituting an action $Y_i\circlearrowleft \bG_i$ by a quotient $\bG\xrightarrowdbl{}\bG_i$ for a cofiltered-limit realization $\bG\cong\varprojlim_i \bG_i$. The abstract gadget is isolated as \Cref{th:act.prod.fr} below, and requires nullhomotopic equivariant maps between (some of) the individual factors $Y_i$ which aggregate to a nullhomotopic equivariant self-map of the product $X$. That this is all available for $\bG:=\bZ_p$ is noted in \cite[Theorem 3.15]{MR3990731} and its proof: there are $\left(\bZ/p^{2k},\bZ/p^k\right)$-equivariant nullhomotopic maps both
\begin{itemize}[wide]
\item  $\left(\bZ/p^{2k}\right)^{*m}\to \left(\bZ/p^{k}\right)^{*m}$ for $k\ge 1$, $m\ge 4$ and primes $p$;  

\item and $\bS^3\to \bS^3$ (for the standard simultaneous-scaling action on $\bS^3\subset \bC^2$). 
\end{itemize}

\subsection*{Acknowledgments}

We are grateful for helpful input from L. D{\k{a}}browski. 

The initial counterexample was identified by ChatGPT Astra and verified by Claude Fable during conversations with the second author. The first author then clarified the construction and extended it substantially into the full results of this paper.

\section{Free actions on products and BDH counterexamples}\label{se:prods.bdh.counter}

Recall \cite[Lemma 4.1]{MR4523296}: for continuous right group actions $X,Y\circlearrowleft \bG$, an equivariant map $X*\bG\to Y$ exists if and only if a nullhomotopic $\bG$-map $h\in Y^X$ does; 
\begin{equation*}
  X\times \left(I:=[0,1]\right)
  \xrightarrow{\quad H\quad}
  Y
  ,\quad
  H(-,0)=h
  ,\quad
  H(-,1)\equiv y_0\in Y
\end{equation*}
will produce a $\bG$-equivariant map
\begin{equation*}
  X*\bG
  \ni
  tg+(1-t)x
  \xmapsto{\quad}
  H\left(xg^{-1},t\right)g
  \in Y.
\end{equation*}

A more specific observation would run as in \Cref{pr:eqvr.map.nullhot} below. We will denote by $\cat{Top}^{\bG}$ the category of topological (right) $\bG$-spaces for a topological group $\bG$ (abbreviating $\cat{Top}:=\cat{Top}^{\{1\}}$ when convenient). 

\begin{proposition}\label{pr:eqvr.map.nullhot}
  Let $\bG$ be a locally compact Hausdorff group and $X\in \cat{Top}^{\bG}$. The $\bG$-space $X*\bG$ with diagonal action represents the functor
  \begin{equation*}
    \cat{Top}^{\bG}
    \ni
    Y
    \xmapsto{\quad}
    \cat{Top}_{X\circlearrowleft \bG}(X*\bG,Y)
    :=
    \left\{
      \cC X
      \xrightarrow[\quad\text{continuous}\quad]{\quad f\quad}
      Y
      \quad:\quad
      f|_{X}\in \cat{Top}^{\bG}
    \right\}
    \in
    \cat{Set},
  \end{equation*}
  where $\cC X:=X\times [0,1]/X\times \{1\}$ is the \emph{cone} on $X$ \cite[p. 9]{hatch_at} and $X\cong X\times \{0\}\subseteq \cC X$ is its base. 
\end{proposition}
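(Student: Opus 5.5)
The plan is to exhibit a natural bijection
\begin{equation*}
  \cat{Top}^{\bG}(X*\bG,Y)\cong \cat{Top}_{X\circlearrowleft \bG}(X*\bG,Y),
\end{equation*}
natural in $Y\in\cat{Top}^{\bG}$, going in both directions via the formula recalled from \cite[Lemma 4.1]{MR4523296}. We write points of $X*\bG$ as $tg+(1-t)x$, so that $t=0$ gives $x$ and $t=1$ gives $g$. We identify $\cC X$ with the closed subspace $\{tg+(1-t)x : g=1\}$ of $X*\bG$, via $(x,t)\mapsto t1+(1-t)x$; the cone point is $1\in\bG\subseteq X*\bG$. Under the diagonal action $(tg+(1-t)x)k=t(gk)+(1-t)(xk)$, this embedding restricts on the base to the inclusion $X\subseteq X*\bG$, which is $\bG$-equivariant.

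\textbf{Forward map.} Given an equivariant $F:X*\bG\to Y$, set $f:=F|_{\cC X}$. Then $f|_X=F|_X$ is a $\bG$-map. This is the map of representable functors induced by the universal element $\cC X\hookrightarrow X*\bG$.

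\textbf{Backward map.} Given $f:\cC X\to Y$ with $f|_X$ equivariant, define
\begin{equation*}
  F\left(tg+(1-t)x\right):=f\left(xg^{-1},t\right)g.
\end{equation*}
We must check three things.
\begin{itemize}[wide]
\item \emph{Well-definedness at $t=1$.} Here $f(\cdot,1)$ is the constant value at the cone point.
\item \emph{Well-definedness at $t=0$.} The value must be independent of $g$. This holds because $f(xg^{-1},0)g=f(x,0)$ by equivariance of $f|_X$, and this is exactly where that hypothesis enters.
\item \emph{Equivariance and inversion.} Equivariance follows from $((xk)(gk)^{-1},t)=(xg^{-1},t)$. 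The two constructions are mutually inverse: restricting $F$ to $g=1$ returns $f$, and conversely an equivariant $F$ satisfies $F(tg+(1-t)x)=F(t1+(1-t)xg^{-1})g$. Naturality in $Y$ is clear.
\end{itemize}

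\textbf{Main obstacle: continuity of $F$.} The space $X*\bG$ is a quotient of $X\times\bG\times I$, so it suffices to show that
\begin{equation*}
  (x,g,t)\mapsto f(xg^{-1},t)g
\end{equation*}
is continuous on $X\times\bG\times I$. This is a composite of the action maps $X\times\bG\to X$ and $Y\times\bG\to Y$, inversion, and $f$ (viewed via $X\times I\to\cC X$), hence continuous. Continuity then descends to the quotient by the universal property of quotient maps.

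The hypothesis that $\bG$ is locally compact Hausdorff is used elsewhere: to know that products with $\bG$ commute with these quotients, e.g.\ that $(X\times\bG\times I)\times\bG\to X*\bG\times\bG$ is a quotient map. This in turn ensures that the diagonal action on $X*\bG$ is continuous, so that $X*\bG$ is really an object of $\cat{Top}^{\bG}$. We get this from Whitehead's lemma that $q\times\mathrm{id}_K$ is a quotient map for any quotient map $q$ and locally compact Hausdorff $K$. I will state this explicitly as the one non-formal input.
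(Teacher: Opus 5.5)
Your proof is correct, and it is essentially the paper's argument written out at the level of points. The paper gets the same bijection from the free--forgetful adjunction $\cat{Top}(\cC X,Y)\cong\cat{Top}^{\bG}(\cC X\times\bG,Y)$, a description of $X*\bG$ as a quotient of $\cC X\times\bG$, and the shear $(p,g)\mapsto(pg,g)$ converting right translation into the diagonal action; unwinding these gives exactly your formula $F(tg+(1-t)x)=f(xg^{-1},t)g$. Your direct checks of well-definedness, of continuity via the quotient $X\times\bG\times I\to X*\bG$, and your use of Whitehead's lemma for continuity of the diagonal action are all sound, and they make explicit where local compactness enters, which the paper leaves implicit.
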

\begin{proof}
  There is a natural isomorphism
  \begin{equation}\label{eq:fgt.g}
    \cat{Top}\left(\cC X,Y\right)
    \cong
    \cat{Top}^{\bG}\left(\cC X\times \bG,Y\right)    
  \end{equation}
  induced by the adjunction between the forgetful functor $\cat{Top}^{\bG}\to \cat{Top}$ and $-\times \bG$, restricting to
  \begin{equation*}
    \cat{Top}_{X\circlearrowleft \bG}\left(\cC X,Y\right)
    \cong
    \cat{Top}^{\bG}\left(X*^r\bG,Y\right)    
  \end{equation*}
  upon
  \begin{itemize}[wide]
  \item identifying
    \begin{equation*}
      X*\bG
      \cong
      \cC X\times \bG\bigg/\left((xg,h)\sim (x,gh),
        \begin{gathered}
          \forall x\in X\subseteq \cC X\\
          \forall g,h\in \bG
        \end{gathered}
        \right),
      \end{equation*}
    \item with ``$*^r$'' denoting the join with the right-hand translation (rather than diagonal) $\bG$-action. 
  \end{itemize}
  To conclude, apply the self-homeomorphism $(p,g)\mapsto (pg,g)$ of $\cC X\times \bG$, intertwining the right-hand and diagonal actions.
\end{proof}

In the particular case $X=Y$ nullhomotopic equivariant self-maps $h\in X^X$ can be produced by means of the device employed in the following statement.

\begin{theorem}\label{th:act.prod.fr}
  Suppose given
  \begin{itemize}[wide]
  \item a topological group $\bG=\varprojlim_I \bG_i$ exhibited as a \emph{cofiltered limit} \spr{04AZ} for
    \begin{equation*}
      \bG_j\xrightarrow{\quad\pi_{ij}\quad}\bG_i
      ,\quad
      j\ge i\in I;
    \end{equation*}

  \item free actions $Y_i\circlearrowleft \bG_{i}:\alpha_i$, $i\in I$;
    
  \item nullhomotopic maps $h_i\in Y_i^{Y_{\theta i}}$, $\pi_{i,\theta i}$-equivariant for a map $\theta\in I^I$, $\theta i\ge i$ in the sense that
    \begin{equation}\label{eq:transition.eqvrnt}
      \forall i
      \left(
        h_i\left(y\triangleleft\alpha_{\theta i}g\right)
        =
        h_i(y)\triangleleft\alpha_i\pi_{i,\theta i}(g)
      \right).
    \end{equation}
  \end{itemize}
  This will provide
  \begin{itemize}[wide]
  \item a free $\bG$-action on $X:=\prod_i Y_i$, with $\bG$ acting on the individual $i$-indexed factor $Y_i$ via $\alpha_i$;
  \item and a nullhomotopic $\bG$-equivariant self-map
    \begin{equation*}
      \prod_i Y_i
      \ni
      \left(y_i\right)_{i\in I}
      \xmapsto{\quad h\quad}
      \left(h_i(y_{\theta i})\right)_{i\in I}
      \in
      \prod_i Y_i.
    \end{equation*}
  \end{itemize}
\end{theorem}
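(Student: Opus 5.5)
The argument splits into three checks: the product action is a well-defined continuous action, it is free, and $h$ is continuous, equivariant and nullhomotopic.

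\textbf{The action.} Write $\pi_i:\bG\to\bG_i$ for the limit projections, so that $\pi_i=\pi_{ij}\circ\pi_j$ for $j\ge i$. Define
\begin{equation*}
  (y_i)_i\triangleleft g:=\left(y_i\triangleleft\alpha_i\pi_i(g)\right)_i .
\end{equation*}
Each coordinate is continuous in $(y,g)$ and is a right action, so this is a continuous right action on the product.

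\textbf{Freeness.} Suppose $(y_i)_i\triangleleft g=(y_i)_i$. Then for every $i$ we have $y_i\triangleleft\alpha_i\pi_i(g)=y_i$. Since $\alpha_i$ is free, $\pi_i(g)=1$ for all $i$. A cofiltered limit embeds in the product $\prod_i\bG_i$, so the projections jointly separate points and $g=1$. If $I$ is empty, $X$ is a point and $\bG$ is trivial, so this degenerate case is harmless.

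\textbf{Continuity and equivariance of $h$.} Each coordinate of $h$ is the composite of the projection $X\to Y_{\theta i}$ with $h_i$, so $h$ is continuous. For equivariance, the $i$-th coordinate of $h(y\triangleleft g)$ is
\begin{equation*}
  h_i\left(y_{\theta i}\triangleleft\alpha_{\theta i}\pi_{\theta i}(g)\right)
  =
  h_i(y_{\theta i})\triangleleft\alpha_i\pi_{i,\theta i}\pi_{\theta i}(g)
  =
  h_i(y_{\theta i})\triangleleft\alpha_i\pi_i(g),
\end{equation*}
using \Cref{eq:transition.eqvrnt} and then compatibility of the limit cone. The right-hand side is the $i$-th coordinate of $h(y)\triangleleft g$.

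\textbf{Nullhomotopy.} This is the only step needing any care, and even it is routine. For each $i$, choose a homotopy $H_i:Y_{\theta i}\times I\to Y_i$ with $H_i(-,0)=h_i$ and $H_i(-,1)\equiv c_i$; this uses choice across $I$. Set
\begin{equation*}
  H(y,t):=\left(H_i(y_{\theta i},t)\right)_i .
\end{equation*}
$H$ is continuous because each coordinate is a continuous map $X\times I\to Y_{\theta i}\times I\to Y_i$, and a map into a product is continuous exactly when its coordinates are. At $t=0$ it equals $h$, and at $t=1$ it is the constant map at $(c_i)_i$.

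One subtlety is that the homotopy need not be equivariant. The statement only asks for $h$ to be a nullhomotopic $\bG$-map in the sense used in \cite[Lemma 4.1]{MR4523296}, and a non-equivariant homotopy is all that is needed there.
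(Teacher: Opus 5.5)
Your proposal is correct and follows the paper's proof: freeness comes from the limit projections $\pi_i$ jointly separating points of $\bG=\varprojlim_I\bG_i$, and the nullhomotopy is obtained by aggregating the individual homotopies $H_i$ coordinatewise into $H\left((y_i)_i,t\right)=\left(H_i(y_{\theta i},t)\right)_i$. You also write out the equivariance and continuity checks that the paper leaves implicit.
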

\begin{proof}
  Freeness for the (plainly continuous) product action
  \begin{equation*}
    \bG
    \xrightarrow{\quad\prod_i \alpha_i\quad}
    \prod_i Y_i^{Y_i}
  \end{equation*}
  follows from $\bG=\varprojlim_I \bG_i$. Aggregating
  \begin{equation*}
    Y_{\theta i}\times [0,1]
    \xrightarrow{\quad H_i\quad}
    Y_i
    ,\quad
    H_i(-,0)\equiv h_i
    ,\quad
    H_i(-,1)\equiv y_{i,0}\in  Y_i
  \end{equation*}
  into a single homotopy
  \begin{equation*}
    \left(\prod_i Y_i\right)\times [0,1]
    \ni
    \left((y_i)_{i\in I},t\right)
    \xmapsto{\quad H\quad}    
    \left(H_i\left(y_{\theta i},t\right)\right)_i
    \in
    \prod_i Y_i
  \end{equation*}
  will finish the proof. 
\end{proof}

The case of interest is $\bG:=\bZ_p\cong \varprojlim_{n\in \bZ_{\ge 0}}\bZ/p^{2^n}$, with $\theta n:=n+1$. The existence of nullhomotopic $h_n$ for $Y_i:=\bS^3$ is noted in the course of the proof of \cite[Theorem 3.15]{MR3990731}. More generally:

\begin{corollary}\label{cor:higher.spheres}
  For every prime $p$ and every $n\in \bZ_{\ge 2}$ there is a free action of the additive $p$-adic group $\bZ_p$ on $X:=\left(\bS^{2n-1}\right)^{\bZ_{\ge 0}}$ admitting an equivariant map $X*\bZ_p\to X$. 
\end{corollary}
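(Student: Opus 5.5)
We will apply \Cref{th:act.prod.fr} with $I:=\bZ_{\ge 0}$, $\theta k:=k+1$, and $\bZ_p\cong\varprojlim_k \bZ/p^{2^k}$. The factors are $Y_k:=\bS^{2n-1}\subset \bC^n$, with $\bZ/p^{2^k}$ acting freely by simultaneous scalar multiplication by the primitive root $\zeta_k:=e^{2\pi i/p^{2^k}}$ on all coordinates. The resulting product action on $X:=\left(\bS^{2n-1}\right)^{\bZ_{\ge 0}}$ is free by \Cref{th:act.prod.fr}. That theorem also yields a nullhomotopic equivariant self-map $h$ of $X$, and \cite[Lemma 4.1]{MR4523296} (equivalently \Cref{pr:eqvr.map.nullhot}) then turns $h$ into an equivariant map $X*\bZ_p\to X$. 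So everything reduces to the following: for each $k$, with $m:=p^{2^k}$ and $q:=p^{2^{k+1}}=m^2$, produce a nullhomotopic map $h_k:\bS^{2n-1}\to\bS^{2n-1}$ that is equivariant along the surjection $\bZ/q\to\bZ/m$, where the source carries the $\zeta_{k+1}$-action and the target the $\zeta_k$-action.

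For the construction we first take the obvious equivariant map $\phi:\bS^{2n-1}\to\bS^{2n-1}$, $\phi(z_1,\dots,z_n):=(z_1^{m},\dots,z_n^{m})/\|\cdot\|$. This is well defined off the origin, and it is equivariant because $\zeta_{k+1}^m=\zeta_k$ generates the target action compatibly with the quotient map. Any other equivariant map $\psi$ differs from $\phi$ only in degree, and the achievable degrees are controlled by equivariant obstruction theory. Equivariant maps from the free $\bZ/q$-space $\bS^{2n-1}$, with $\bZ/q$ acting through $\bZ/m$ on the target, correspond to maps of orbit spaces, i.e.\ sections over the lens space $L^{2n-1}(q)$. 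Since the action on the source is free and the fibre is $(2n-2)$-connected, two such maps agree on the $(2n-2)$-skeleton of an equivariant CW structure. The only freedom is in the top cell, and modifying there changes the degree by multiples of $|\bZ/q|=q$. The degrees realizable by equivariant maps are therefore exactly $\deg\phi+q\bZ = m^n+q\bZ$.

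It follows that a degree-zero, hence nullhomotopic, equivariant $h_k$ exists if and only if $q\mid m^n$, i.e.\ $p^{2^{k+1}}\mid p^{n2^k}$. This holds precisely when $n\ge 2$, which explains the hypothesis. Concretely, we realize $h_k$ by taking $\phi$ and, on one top-dimensional equivariant cell (a fundamental domain), pinching off a sphere and composing with a map of degree $-m^n/q$, then translating this modification equivariantly to the $q$ translates of the cell. The total degree change is then $q\cdot(-m^n/q)=-m^n$. Finally, Hopf's theorem gives that degree $0$ implies nullhomotopy.

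The main obstacle is making the degree-modification step rigorous: one must check that the pinch construction is compatible with the free action and that the degree changes add up as claimed. We intend to do this with an explicit $\bZ/q$-CW structure on $\bS^{2n-1}$, namely the standard lens-space cell decomposition with one free orbit of cells in each dimension, and to compute degrees by counting preimages of a regular value. Alternatively, one can cite the standard fact that equivariant maps between free $G$-spheres of equal dimension exist exactly in degrees congruent mod $|G|$ (tom Dieck, Transformation Groups, II.4), applied after pulling the target action back to $\bZ/q$.
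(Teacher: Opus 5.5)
Your proposal is correct and follows essentially the same route as the paper. It uses the same inverse system $\bZ/p^{2^k}$ with $\theta k=k+1$, and the same coordinatewise $m$-th power map on $\bS^{2n-1}$ of degree $m^n$, equivariant along $\bZ/m^2\to\bZ/m$. You then modify it equivariantly on a free orbit of small balls down to degree $0$, which works precisely because $m^2\mid m^n$ when $n\ge 2$, and conclude via \Cref{th:act.prod.fr} and \Cref{pr:eqvr.map.nullhot}, just as the paper does.

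One wording fix: two such equivariant maps are equivariantly \emph{homotopic}, not equal, on the $(2n-2)$-skeleton. That is all your degree-congruence argument needs, and the congruence is more than existence requires, since the explicit pinch construction already suffices.
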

\begin{proof}
  Embed
  \begin{equation*}
    \bS^{2n-1}
    =
    \left\{(z_1,\cdots,z_n)\in \bC^n\ :\ \sum_i |z_i|^2=1\right\}
    \subset
    \bC^n
  \end{equation*}
  and consider the degree-$q^n$ self-map
  \begin{equation*}
    \bS^{2n-1}
    \ni
    (r_i u_i)_{i=1}^n
    \xmapsto{\quad}
    (r_i u_i^q)_{i=1}^n
    \in
    \bS^{2n-1}
    \quad
    \left(
      \text{polar decomposition }\bC\ni z_i=r_i u_i
    \right)
  \end{equation*}
  for any $p$-power $q$, equivariant with respect to the canonical surjection $\bZ/q^2\xrightarrowdbl{} \bZ/q$ in the sense of \Cref{eq:transition.eqvrnt}. Setting $q:=p^{2^i}$, said maps can be altered along the free $q^2$-sized orbits (cf. \cite[\S 4]{MR3803650}) to produce nullhomotopic self-maps $h_i$ of $\bS^{2n-1}$. The conclusion now follows from \Cref{th:act.prod.fr} on taking $Y:=\bS^{2n-1}$ and realizing $\bZ_p\cong \varprojlim_{i\in \bZ_{\ge 0}}\bZ/p^{2^i}$ with $\theta i:=i+1$.
\end{proof}

\begin{remarks}\label{res:why.3sph}
  \begin{enumerate}[(1),wide]
  \item The degree-modification procedure in the proof of \Cref{cor:higher.spheres} requires that the orbit size $q^2$ divide the degree $q^n$ of the original map; the argument thus only goes through for odd spheres of dimension $\ge 3$.

  \item For $\bZ_p$, \Cref{th:act.prod.fr} can also be instantiated with a different choice of actions $Y_i \circlearrowleft \bG_i$: recall \cite[Theorem 3.15]{MR3990731} the existence of maps
    \begin{equation}\label{eq:z2p.2.zp}
      \left(\bZ/p^{2k}\bZ\right)^{*(m+1)}
      \xrightarrow[\quad\text{$\left(\bZ/p^{2k},\bZ/p^k\bZ\right)$-equivariant}\quad]{\quad}
      \left(\bZ/p^{k}\bZ\right)^{*m}
      \quad
      \begin{gathered}
        k\ge 1\\
        m\ge 4\\
        \text{prime }p\ge 2,
      \end{gathered}
    \end{equation}
    whence (by \Cref{pr:eqvr.map.nullhot}) nullhomotopic $\left(\bZ/p^{2k}\bZ\right)^{*m}\to \left(\bZ/p^{k}\bZ\right)^{*m}$, equivariant with respect to the surjection $\bZ/p^{2k}\xrightarrowdbl{}\bZ/p^k\bZ$. We can thus take
    \begin{equation*}
      (I,\le):=\left(\bZ_{\ge 0},\le\right)
      ,\quad
      \bG_i:=\bZ/p^{2^i k}\bZ
      ,\quad
      Y_i:= E_{m-1}\bG_i:= \bG_i^{*m}
      ,\quad
      i\xmapsto{\ \theta\ }i+1
    \end{equation*}
    for fixed $p$, $k$ and $m$ as in the already-referenced \cite[Theorem 3.15]{MR3990731}. 
  \end{enumerate}  
\end{remarks}

\cite[Theorem 3.15]{MR3990731}'s approach to \Cref{eq:z2p.2.zp} in fact uses the $\left(\bZ/p^{2k},\bZ/p^k\right)$-equivariant self-maps on $\bS^{3}$ mentioned in the proof of \Cref{cor:higher.spheres}. An alternative cohomological, obstruction-theoretic argument is available affording an application of \Cref{th:act.prod.fr}; we provide it in \Cref{th:z2p.2.zp} for completeness (note also the slight improvement over  \cite[Theorem 3.15]{MR3990731}, to the sharp range $m\ge 3$). 


For a topological group $\bG$, write
\begin{equation*}
  E\bG:=\bigcup_n \left(E_n\bG:=\bG^{*(n+1)}\right)
  ,\quad
  B\bG:=E\bG/\bG=\bigcup_n \left(B_n\bG:=E_n\bG/\bG\right)
\end{equation*}
for the total and base space of Milnor's \emph{universal bundle} discussed in \cite[\S 14.4.3]{td_alg-top} (and its $n$-dimensional truncations indicated by the subscripts ``$n$''). We refer the reader to \cite[\S 2]{MR2475974} and/or \cite[\S 2.3]{MR3426700}.

\begin{notation}\label{not:bs}
  For a finite group $\bG$, write $\beta_{\bG}\in H^1(\bG,K_{\bG})$ for the \emph{Berstein-Schwarz class} \cite[\S 2.3]{MR3426700} attached (i.e. classifying) the extension 
  \begin{equation*}
    0\to
    K_{\bG}
    \lhook\joinrel\xrightarrow{\quad}
    \bZ\bG
    \xrightarrowdbl[\quad\text{counit}\quad]{\quad\varepsilon\quad}
    \bZ
    \to 0
  \end{equation*}
  in the category of $\bG$-modules. 
\end{notation}

\begin{theorem}\label{th:z2p.2.zp}
  The following conditions on $d\in \bZ_{\ge 1}$ are equivalent.
  \begin{enumerate}[(a),wide]
  \item\label{item:th:z2p.2.zp:d3} $d\ge 3$.

  \item\label{item:th:z2p.2.zp:res.n.n2} For every $n\in \bZ_{\ge 1}$ the group-cohomology restriction maps
    \begin{equation*}
      H^d(\bZ/n,\bullet)
      \xrightarrow{\quad}
      H^d(\bZ/n^2,\bullet)
    \end{equation*}
    vanish.

  \item\label{item:th:z2p.2.zp:res.n.n} For every $n\in \bZ_{\ge 1}$ maps
    \begin{equation*}
      H^d(\bZ/n,\bullet)
      \xrightarrow{\quad}
      H^d(\bZ/N,\bullet)
    \end{equation*}
    vanish for $N\in \bZ_{\ge 1}$ sufficiently large divisibility-wise. 

  \item\label{item:th:z2p.2.zp:res.n.prof} For every $n\in \bZ_{\ge 1}$ the maps
    \begin{equation*}
      H^d(\bZ/n,\bullet)
      \xrightarrow{\quad}
      H^d\left(\wh{\bZ},\bullet\right)
      \cong\varinjlim_N H^d\left(\bZ/N,\bullet^{N\wh{\bZ}}\right)
      ,\quad
      \wh{\bZ}:=\text{\emph{profinite completion} $\varprojlim_N \bZ/N$}
    \end{equation*}
    vanish.

  \item\label{item:th:z2p.2.zp:bg} Any of \Cref{item:th:z2p.2.zp:res.n.n2}-\Cref{item:th:z2p.2.zp:res.n.prof} applied only to the single class $\beta^d_{\bZ/n}$.
    
  \item\label{item:th:z2p.2.zp:eqv.n.n2} For every $n\in \bZ_{\ge 1}$ we have a map
    \begin{equation*}
      E_d\left(\bZ/n^2\right)
      =
      \left(\bZ/n^2\right)^{*(d+1)}
      \xrightarrow[\
      \text{$\left(\bZ/n^2,\bZ/n\right)$-equivariant}\ ]{\quad}
      \left(\bZ/n\right)^{*d}
      =
      E_{d-1}\left(\bZ/n\right).
    \end{equation*}
    
  \item\label{item:th:z2p.2.zp:eqv.n.n} For every $n\in \bZ_{\ge 1}$ and every $N\in \bZ_{\ge 1}$ sufficiently large divisibility-wise, we have a map
    \begin{equation}\label{eq:em.2.em1}
      E_d\left(\bZ/N\right)
      =
      \left(\bZ/N\right)^{*(d+1)}
      \xrightarrow[\
      \text{$\left(\bZ/N,\bZ/n\right)$-equivariant}\ ]{\quad}
      \left(\bZ/n\right)^{*d}
      =
      E_{d-1}\left(\bZ/n\right).
    \end{equation}
  \end{enumerate}
\end{theorem}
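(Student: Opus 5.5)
The plan is to run a cycle of implications. The cohomological conditions will be compared through cup-product periodicity for cyclic groups. The geometric conditions \Cref{item:th:z2p.2.zp:eqv.n.n2} and \Cref{item:th:z2p.2.zp:eqv.n.n} will be tied to \Cref{item:th:z2p.2.zp:bg} by primary obstruction theory. Throughout, ``restriction'' along $\bZ/N\xrightarrowdbl{}\bZ/n$ means inflation, and $\bullet$ ranges over $\bZ/n$-modules regarded as $\bZ/N$-modules.

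\emph{Step 1: \Cref{item:th:z2p.2.zp:d3} $\Rightarrow$ \Cref{item:th:z2p.2.zp:res.n.n2}.} Let $u_n\in H^2(\bZ/n,\bZ)\cong \bZ/n$ be the canonical generator, namely the Chern class of the defining character. Cup product with $u_n$ is an isomorphism $H^{j}(\bZ/n,M)\to H^{j+2}(\bZ/n,M)$ for $j\ge 1$. Pulling $u_n$ back along $\bZ/n^2\to\bZ/n$ gives $n\,u_{n^2}$. For $d\ge 3$, write a class as $x\cup u_n$ with $x\in H^{d-2}(\bZ/n,M)$ and $d-2\ge 1$. Then
\begin{equation*}
  \mathrm{inf}(x\cup u_n)=\mathrm{inf}(x)\cup n\,u_{n^2}=\mathrm{inf}(nx)\cup u_{n^2}=0,
\end{equation*}
because positive-degree cohomology of $\bZ/n$ is $n$-torsion.

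\emph{Step 2: the formal implications.} The implications \Cref{item:th:z2p.2.zp:res.n.n2} $\Rightarrow$ \Cref{item:th:z2p.2.zp:res.n.n} $\Rightarrow$ \Cref{item:th:z2p.2.zp:res.n.prof} follow from transitivity of inflation and the colimit description of $H^d(\wh{\bZ},\bullet)$. Each of these implies the corresponding version of \Cref{item:th:z2p.2.zp:bg}, since \Cref{item:th:z2p.2.zp:bg} only asks for less.

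\emph{Step 3: \Cref{item:th:z2p.2.zp:bg} $\Rightarrow$ \Cref{item:th:z2p.2.zp:d3}.} Here we exhibit nonvanishing for $d=1,2$, using that $\beta_{\bZ/n}^d$ is the universal class through which all degree-$d$ classes factor. For $d=1$, $\beta$ maps to the identity in $H^1(\bZ/n,\bZ/n)=\mathrm{Hom}(\bZ/n,\bZ/n)$ under $K\to\bZ/n$. Its inflation to $\bZ/n^2$ and to $\wh{\bZ}$ is the projection, which is nonzero. For $d=2$, $\beta^2$ maps to $u_n$, whose inflation to $\bZ/N$ is $(N/n)u_N\ne 0$. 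The direct-limit image in $H^2(\wh{\bZ},\bZ)$ is more delicate. I would instead push to $\bZ/n$-coefficients, $H^2(\bZ/n,\bZ/n)\to H^2(\bZ/N,\bZ/n)$, which is the Bockstein of the degree-$1$ class. One needs to check that this stays nonzero in the colimit. If it does not, the sharpness for \Cref{item:th:z2p.2.zp:res.n.prof} would have to be argued via the module $K_{\bZ/n}$ itself.

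\emph{Step 4: the geometric conditions.} The space $E_{d-1}(\bZ/n)$ is $(d-2)$-connected of dimension $d-1$, so its first nontrivial homotopy group is $\pi_{d-1}\cong H_{d-1}=K^{\otimes d}$, which in the top degree is the kernel module. By the Berstein--Schwarz theory of \cite[\S 2.3]{MR3426700}, $\beta^d_{\bZ/n}$ is the primary (and only) obstruction to an equivariant map from a free $d$-dimensional complex into $E_{d-1}(\bZ/n)$. For the complex $E_d(\bZ/N)$ with the $\bZ/N$-action, this obstruction is the image of the inflated class $\mathrm{inf}\,\beta^d$ in the equivariant cohomology $H^d_{\bZ/N}(E_d(\bZ/N);\pi_{d-1})=H^d(B_d(\bZ/N);\pi_{d-1})$. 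The map $H^d(B(\bZ/N))\to H^d(B_d(\bZ/N))$ is injective, since $B_d$ contains the $d$-skeleton and attaching higher cells adds no relations in degree $d$. Hence an equivariant map exists if and only if the inflation of $\beta^d$ to $\bZ/N$ vanishes. This gives \Cref{item:th:z2p.2.zp:eqv.n.n2} $\Leftrightarrow$ \Cref{item:th:z2p.2.zp:bg} for $N=n^2$ and \Cref{item:th:z2p.2.zp:eqv.n.n} $\Leftrightarrow$ \Cref{item:th:z2p.2.zp:bg} in the ``sufficiently divisible'' form.

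The main obstacle is Step 4: identifying the obstruction class precisely with inflated $\beta^d$, with the correct local coefficients, and verifying the injectivity claim for the truncated classifying space. The $\wh{\bZ}$ sharpness at $d=2$ in Step 3 is the secondary subtle point.
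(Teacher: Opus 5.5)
There is a genuine gap in Step 4, and it lies in low degrees. Your identification of $\mathrm{inf}\,\beta^d_{\bZ/n}$ as the complete obstruction to a $(\bZ/N,\bZ/n)$-map $E_d(\bZ/N)\to E_{d-1}(\bZ/n)$ uses that the target is simply connected with $\pi_{d-1}\cong H_{d-1}\cong K^{\otimes d}$. That requires $d\ge 3$. For $d=2$ the target $E_1(\bZ/n)$ is the graph $K_{n,n}$, whose fundamental group is free non-abelian for $n\ge 3$, so neither Hurewicz nor abelian obstruction theory applies. For $d=1$ the target is the discrete space $\bZ/n$. For $d\ge 3$ your argument is fine and is the paper's. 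But your cycle also needs (f) and (g) to \emph{fail} for $d=1,2$, and Step 4 is your only link from them back to the cohomological conditions. As written, that failure is not established.

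The paper explicitly restricts the obstruction-theoretic equivalence to $d\ge 3$ and treats $d=1,2$ by hand. For $d=1$, an equivariant map into the discrete transitive $\bZ/n$-space would have to be both surjective and constant. For $d=2$, it takes $n=2$, so the target is an antipodal circle, and applies the circle Borsuk--Ulam theorem inside $E_1(\bZ/2m)/(\bZ/m)$.

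You can also close the gap with your own tools, since the direction needed here requires no obstruction theory. Let $\varphi\colon\bZ/N\to\bZ/n$ be the projection. Such a map makes $B\varphi|_{B_d(\bZ/N)}$ factor, up to homotopy, through the $(d-1)$-dimensional complex $B_{d-1}(\bZ/n)$, where $H^d$ vanishes with any coefficients. So $\mathrm{inf}\,\beta^d_{\bZ/n}$ restricts to zero on $B_d(\bZ/N)$, hence vanishes by the injectivity you invoke, contradicting Step 3.

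There are also two smaller points. First, in Step 3 the $\wh{\bZ}$ case at $d=2$ is not delicate. You have already shown $\mathrm{inf}\,u_n=(N/n)u_N\ne 0$ in $H^2(\bZ/N,\bZ)$ for every $N$. A class in $\varinjlim_N H^2(\bZ/N,\bZ)$ vanishes only if it vanishes at some finite stage; equivalently, $H^2(\wh{\bZ},\bZ)\cong H^1(\wh{\bZ},\bQ/\bZ)\cong\bQ/\bZ$ with $u_n\mapsto 1/n$. This is exactly the paper's argument. Your proposed detour through $\bZ/n$-coefficients would actually fail: the image of $(N/n)u_N$ in $H^2(\bZ/N,\bZ/n)\cong\bZ/n$ is $N/n \bmod n$, which vanishes once $n^2\mid N$, consistent with $H^2(\wh{\bZ},\bZ/n)=0$.

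Second, injectivity of $H^d(B\bZ/N)\to H^d(B_d\bZ/N)$ should come from the $d$-connectedness of the pair $(B\bZ/N,B_d\bZ/N)$, since $E_d$ is $(d-1)$-connected and $E$ is contractible. This is the paper's long-exact-sequence argument. In the Milnor join model, $B_d$ does not contain the $d$-skeleton.

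Step 1 is correct, and is a cleaner packaging of the paper's periodicity argument.
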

\begin{proof}
  The implications 
  \begin{equation*}
    \begin{tikzpicture}[>=stealth,auto,baseline=(current  bounding  box.center)]
      \path[anchor=base] 
      (0,0) node (l) {\Cref{item:th:z2p.2.zp:d3}}
      +(2,1) node (mu) {\Cref{item:th:z2p.2.zp:eqv.n.n}}
      +(2,-1.5) node (md) {\Cref{item:th:z2p.2.zp:res.n.n2}}
      +(4,1) node (ru) {\Cref{item:th:z2p.2.zp:eqv.n.n2}}
      +(4,-1) node (rd) {\Cref{item:th:z2p.2.zp:res.n.n}}
      +(6,0) node (r) {\Cref{item:th:z2p.2.zp:res.n.prof}}
      ;
      \draw[-implies,double equal sign distance] (l) to[bend left=6] node[pos=.5,auto] {$\scriptstyle \mathrm{scd}\left(\wh{\bZ}\right)=2$} node[pos=.5,auto,swap] {$\scriptstyle \text{\cite[\S I.3.2, Example (1)]{ser_gal}}$} (rd);
      \draw[-implies,double equal sign distance] (md) to[bend right=6] node[pos=.5,auto] {$\scriptstyle $} (rd);
      \draw[-implies,double equal sign distance] (rd) to[bend right=6] node[pos=.5,auto] {$\scriptstyle $} (r);
      \draw[-implies,double equal sign distance] (ru) to[bend right=6] node[pos=.5,auto] {$\scriptstyle $} (mu);
    \end{tikzpicture}
  \end{equation*}
  are in place, formally for most of the unadorned arrows and with ``$\mathrm{scd}$'' denoting \emph{strict cohomological dimension} \cite[\S I.3.2]{ser_gal}. Further:
  \begin{equation*}
    \begin{gathered}
      \forall\left(\bullet\in \left\{\text{\Cref{item:th:z2p.2.zp:res.n.n2},\Cref{item:th:z2p.2.zp:res.n.n},\Cref{item:th:z2p.2.zp:res.n.prof}}\right\}\right)
      \left(
        \bullet
        \xRightarrow[\quad]{\quad}
        \text{\Cref{item:th:z2p.2.zp:bg}}\bullet
      \right)
      \quad\text{and}\\
      \text{\Cref{item:th:z2p.2.zp:bg}\Cref{item:th:z2p.2.zp:res.n.n2}}
      \xRightarrow{\quad}
      \text{\Cref{item:th:z2p.2.zp:bg}\Cref{item:th:z2p.2.zp:res.n.n}}
      \xLeftrightarrow{\quad H^d\left(\wh{\bZ},\bullet\right)
        \cong\varinjlim_N H^d\left(\bZ/N,\bullet^{N\wh{\bZ}}\right)\quad}
      \text{\Cref{item:th:z2p.2.zp:bg}\Cref{item:th:z2p.2.zp:res.n.prof}},
    \end{gathered}
  \end{equation*}
  with \Cref{item:th:z2p.2.zp:bg}$\bullet$ meaning ``$\bullet$-flavored variant of \Cref{item:th:z2p.2.zp:bg}''. Conversely:
  \begin{equation*}
    \forall\left(\bullet\in \left\{\text{\Cref{item:th:z2p.2.zp:res.n.n2},\Cref{item:th:z2p.2.zp:res.n.n},\Cref{item:th:z2p.2.zp:res.n.prof}}\right\}\right)
    \left(
      \text{\Cref{item:th:z2p.2.zp:bg}}\bullet      
      \xRightarrow[\quad]{\quad}
      \bullet
    \right)
  \end{equation*}
  by the universality \cite[Corollary 3.5]{MR2475974} of the Berstein-Schwarz class. A few other implications will close the circle.

  \begin{enumerate}[label={},wide]

  \item\textbf{\Cref{item:th:z2p.2.zp:d3} $\Rightarrow$ \Cref{item:th:z2p.2.zp:res.n.n2}:} One elementary observation (appearing as \cite[Exercise 1 post \S Xiii.1]{ser_lf}, for instance) will be useful: having identified \cite[\S VIII.4]{ser_lf}
    \begin{equation*}
      \begin{aligned}
        H^{\text{even $\ge 2$}}(\bZ/n\cong\Braket{\sigma},A)
        &\cong
          \ker \left(\sigma-1)\right/\im \Sigma\\
        H^{\text{odd $\ge 1$}}(\bZ/n,A)
        &\cong
          \ker \Sigma/\im \left(\sigma-1\right)
      \end{aligned}
      ,\quad
      \Sigma:=\sum_{i=0}^{n-1}\sigma^i,
    \end{equation*}
    we have
    \begin{equation*}
      H^{d\ge 0}(\bZ/n,\bullet)
      \xrightarrow[\quad\text{restriction}\quad]{\quad \text{multiplication by }m^{\lfloor\frac d2\rfloor}\quad}
      H^d(\bZ/nm,\bullet).
    \end{equation*}
    Indeed, this is so
    \begin{itemize}[wide]
    \item for $d=0,1$ immediately;
    \item for the class
      \begin{equation}\label{eq:u.can.cls}
        u
        \in
        H^2\left(\bZ/N,\bZ\right)
        \cong
        H^1\left(\bZ/N,\bQ/\bZ\right)
      \end{equation}
      corresponding to the character sending a fixed generator to $\frac 1N$;
    \item hence also generally by periodicity \cite[\S VIII.4, Remark post Corollary to Proposition 6]{ser_lf}
      \begin{equation*}
        H^{d\ge 1}\left(\bZ/N,\bullet\right)
        \xrightarrow[\quad\cong \quad]{\quad u\cdot \quad}
        H^{d+2}\left(\bZ/N,\bullet\right).
      \end{equation*}
    \end{itemize}
    Restriction naturality in the form of
    \begin{equation*}
      \begin{tikzpicture}[>=stealth,auto,baseline=(current  bounding  box.center)]
        \path[anchor=base] 
        (0,0) node (l) {$H^{\bullet\ge 1}(\bG,I/A)$}
        +(3,.5) node (u) {$H^{\bullet}(\bH,I/A)$}
        +(3,-.5) node (d) {$H^{\bullet+1}(\bG,A)$}
        +(6,0) node (r) {$H^{\bullet+1}(\bH,A)$}
        ;
        \draw[->] (l) to[bend left=6] node[pos=.5,auto] {$\scriptstyle \mathrm{res}$} (u);
        \draw[->] (u) to[bend left=6] node[pos=.5,auto] {$\scriptstyle $} (r);
        \draw[->] (l) to[bend right=6] node[pos=.5,auto,swap] {$\scriptstyle \cong$} (d);
        \draw[->] (d) to[bend right=6] node[pos=.5,auto,swap] {$\scriptstyle \mathrm{res}$} (r);
      \end{tikzpicture}
    \end{equation*}
    for group morphisms $\bH\to \bG$ and embeddings $A\le I$ into injective $\bG$-modules reduces the problem to $d=3$ (or, what matters: odd $d$ only). Observe, to that end, that for any $\bZ/n$-module $A$
    \begin{equation*}
      \forall\left(a\in A\right)
      \left(
        \Sigma_{\bZ/n}a=0
        \xRightarrow{\quad}
        na
        =
        na-\Sigma_{\bZ/n}a
        =
        \sum_{i=0}^{n-1}(1-\sigma^i)a
        \in
        \im\left(\sigma-1\right)
      \right).
    \end{equation*}
    
  \item\textbf{\Cref{item:th:z2p.2.zp:res.n.prof} $\Rightarrow$ \Cref{item:th:z2p.2.zp:d3}:} Simply observe that \Cref{eq:u.can.cls}, regarded as an element of either $H^{1,2}$, does not restrict to 0:
    \begin{itemize}[wide]
    \item for $H^1$ because the character
      \begin{equation*}
        \bZ/n\cong\Braket{\sigma}
        \ni
        \sigma
        \xmapsto{\quad}
        \frac 1n
        \in \bQ/\bZ
      \end{equation*}
      will of course remain non-trivial upon restricting to $\wh{\bZ}$;

    \item and for $H^2$ because $\bZ$, regarded as an extension of $\bZ/n$ by $\bZ$, does not split when pulled back along $\bZ/nm\xrightarrowdbl{}\bZ/n$ no matter the $m$. 
    \end{itemize}

  \item\textbf{
      \Cref{item:th:z2p.2.zp:bg}\Cref{item:th:z2p.2.zp:res.n.n2} $\iff$ \Cref{item:th:z2p.2.zp:eqv.n.n2}
      and
      \Cref{item:th:z2p.2.zp:bg}\Cref{item:th:z2p.2.zp:res.n.n} $\iff$ \Cref{item:th:z2p.2.zp:eqv.n.n}
      :} Recall (e.g. \cite[Corollary 2.6]{MR3426700} and obstruction-theory naturality) that for a principal $\bG$-bundle $P\xrightarrowdbl{\pi}X$ with classifying map $X\xrightarrow{f} B\bG$ the pullback
    \begin{equation*}
      f^*\left(\beta_{\bG}^{M+1}\right)
      \in
      H^{M+1}\left(X;f^*K^{\otimes(M+1)}_{\bG}\right)
      ,\quad
      \left(\text{$K^{\otimes(M+1)}_{\bG}$ regarded as a bundle over $B\bG$}\right)
    \end{equation*}
    is the \emph{primary obstruction} \cite[p. 419]{hatch_at} to the existence of a $\bG$-equivariant map $P\to E_M\bG$ (equivalently, to $\pi$ being a pullback of $E_M\bG\xrightarrowdbl{} B_M\bG$). 

    For a general topological-group morphism $\bH\to \bK$ the classifying map of the principal $\bK$-bundle
    \begin{equation*}
      E\bH\times_{\bH}\bK
      \xrightarrowdbl{\quad}
      \left(E\bH\times_{\bH}\bK\right)/\bK
      \cong
      E\bH/\bH
      =
      B\bH
    \end{equation*}
    is classified precisely by the map $B\bH\to B\bK$ afforded by classifying-space functoriality. Applying this to $\bZ/N\xrightarrowdbl{\varphi} \bZ/n$,
    \begin{equation*}
      \dim B_d=d
      \xRightarrow{\quad}
      \left(\text{\Cref{eq:em.2.em1} exists $\xLeftrightarrow{\quad}$ }\left(B\varphi|_{B_d \bZ/N}\right)^*\beta^{d}_{\bZ/n}=0\right).
    \end{equation*}
    The long exact sequence of the pair $\left(B\bZ/N,\ B_{d}\bZ/N\right)$ further equates this to $B\varphi^*\beta^{d}_{\bZ/n}$ vanishing, completing the proof of the present branch for $d\ge 3$. To conclude, it remains to argue that for each claimed equivalence the two conditions involved fail simultaneously for $d=1,2$.

    On the one hand, \Cref{item:th:z2p.2.zp:bg}\Cref{item:th:z2p.2.zp:res.n.n2} and \Cref{item:th:z2p.2.zp:bg}\Cref{item:th:z2p.2.zp:res.n.n} do so fail by their already-noted respective equivalence to \Cref{item:th:z2p.2.zp:res.n.n2} and \Cref{item:th:z2p.2.zp:res.n.n} (Berstein-Schwarz universality) and the proof of \Cref{item:th:z2p.2.zp:res.n.prof} $\Rightarrow$ \Cref{item:th:z2p.2.zp:d3} above.

    On the other hand, consider a nullhomotopic
    \begin{equation*}
      \left(\bZ/nm\right)^{*d}
      \xrightarrow[\
      \text{$\left(\bZ/nm,\bZ/n\right)$-equivariant}\ ]{\quad n\ge 2,\quad d=1,2\quad}
      \left(\bZ/n\right)^{*d}.
    \end{equation*}
    For $d=1$ this is an equivariant surjection of discrete finite spaces carrying transitive actions, contradicting nullhomotopy (which would instead force constancy) as soon as $n\ge 2$. As to $d=2$, consider the case $n:=2$, $N:=2m$:
    \begin{itemize}[wide]
    \item $E_1\bZ/2$ is an antipodal circle (i.e. circle carrying the antipodal $\bZ/2$-action);
    \item the quotient $\left(E_1\bZ/2m\right)/\left(\bZ/m\right)$ consists of $m$ antipodal circles glued along four shared points constituting two pairs of antipodes;
    \item because in particular the latter space contains an antipodal circle, the circle-specific Borsuk-Ulam theorem and the torsion-freeness of the codomain provide a non-trivial composition
      \begin{equation*}
        \pi_1 E_1\bZ/2m
        \lhook\joinrel\xrightarrow{\quad\text{finite index}\quad}
        \pi_1 \left(E_1\bZ/2m\right)/\left(\bZ/m\right)
        \xrightarrow[\quad\text{\cite[Proposition 2.4.1(iii)]{mat_bu}}\quad]{\quad\text{non-trivial}\quad}
        \bZ\cong \pi_1\bS^1
      \end{equation*}
      as a consequence of any \Cref{eq:em.2.em1}. 
    \end{itemize}
  \end{enumerate}
\end{proof}

\Cref{th:z2p.2.zp} in hand, the argument proving \cite[Theorem 3.15]{MR3990731} can be run in reverse: appropriately equivariant maps between spheres exist as a consequence of having \Cref{eq:em.2.em1} available (rather than vice versa), independently of any sphere-specific degree considerations. Recall \cite[Definition 6.2.1]{mat_bu} that for finite non-trivial $\bG$ an \emph{$E_n\bG$-space} (\emph{$E_n$-space} with $\bG$ understood) is a finite free simplicial $\bG$-complex, $n$-dimensional and $(n-1)$-connected. 

\begin{corollary}\label{cor:sph.maps.conseq}
  For
  \begin{equation*}
    X\circlearrowleft \bZ/n^2
    \quad\text{and}\quad
    Y\circlearrowleft \bZ/n
    \quad
    \left(\text{$E_d$-spaces}\right)
    ,\quad
    n\in \bZ_{\ge 1},\ d\in \bZ_{\ge 3}
  \end{equation*}
  There are nullhomotopic $(\bZ/n^2,\bZ/n)$-equivariant maps $X\to Y$ or, equivalently, equivariant maps $X*\bZ/n^2\to Y$. 
\end{corollary}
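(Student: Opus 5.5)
The plan is to reduce to \Cref{th:z2p.2.zp}\Cref{item:th:z2p.2.zp:eqv.n.n2} by comparing the given $E_d$-spaces with Milnor's truncations $E_d(\bZ/n^2)$ and $E_{d-1}(\bZ/n)$, which are standard $E_d$- and $E_{d-1}$-spaces respectively. The equivalence of the two formulations in the statement is immediate from \Cref{pr:eqvr.map.nullhot}: equivariant maps $X*\bZ/n^2\to Y$ correspond to maps $\cC X\to Y$ whose restriction to the base is $(\bZ/n^2,\bZ/n)$-equivariant, i.e. to nullhomotopic equivariant maps $X\to Y$. Here $\bZ/n^2$ acts on $Y$ through $\bZ/n^2\xrightarrowdbl{}\bZ/n$. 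So it suffices to produce an equivariant map $X*\bZ/n^2\to Y$.

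\textbf{Step 1.} Build an equivariant map $X\to E_{d-1}(\bZ/n)=(\bZ/n)^{*d}$, nullhomotopic. Since $X$ is a free $d$-dimensional $\bZ/n^2$-complex and $E_d(\bZ/n^2)$ is $(d-1)$-connected, the standard equivariant obstruction theory (\cite[Proposition 6.2.2]{mat_bu}-style) gives a $\bZ/n^2$-map $X\to E_d(\bZ/n^2)$. Compose it with the map $E_d(\bZ/n^2)\to E_{d-1}(\bZ/n)$ of \Cref{th:z2p.2.zp}\Cref{item:th:z2p.2.zp:eqv.n.n2}, valid since $d\ge 3$. The latter map is defined on $E_d(\bZ/n^2)\supset E_{d-1}(\bZ/n^2)*\bZ/n^2$. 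Restricting to the sub-join and applying \Cref{pr:eqvr.map.nullhot} yields a nullhomotopic equivariant $E_{d-1}(\bZ/n^2)\to E_{d-1}(\bZ/n)$. To avoid the dimension shift, however, I would instead argue directly at the level of $X*\bZ/n^2$.

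\textbf{Step 2.} $X*\bZ/n^2$ is a free $(d+1)$-dimensional $\bZ/n^2$-complex. We want a $\bZ/n^2$-map into $Y$, where $Y$ is $(d-1)$-connected and $d$-dimensional. Obstruction theory gives a map on the $d$-skeleton freely, since $Y$ is $(d-1)$-connected. The only obstruction is the primary one in $H^{d+1}_{\bZ/n^2}(X*\bZ/n^2;\pi_d Y)$, which is the pullback of the Berstein--Schwarz-type class for $Y$. More cleanly, I would compare both spaces with the Milnor models. There is a $\bZ/n^2$-map $X*\bZ/n^2\to E_{d+1}(\bZ/n^2)$, and a $\bZ/n$-map $E_{d-1}(\bZ/n)\to Y$, since $E_{d-1}(\bZ/n)$ is free of dimension $d-1$ and $Y$ is $(d-2)$-connected. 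This gives
\begin{equation*}
  X*\bZ/n^2\to E_{d+1}(\bZ/n^2)\xrightarrow{\ ?\ }E_{d-1}(\bZ/n)\to Y.
\end{equation*}
The middle arrow is not directly available: \Cref{th:z2p.2.zp} only supplies a map $E_d(\bZ/n^2)\to E_{d-1}(\bZ/n)$, not one out of $E_{d+1}(\bZ/n^2)$. So I would instead redo the obstruction computation in the proof of \Cref{th:z2p.2.zp} with $Y$ in place of $E_{d-1}(\bZ/n)$. The primary obstruction to a $\bZ/n^2$-map from a free complex $P$ into $Y$ is $f^*$ of the pullback along $B(\bZ/n^2)\to B(\bZ/n)$ of the class obstructing sections into $Y$. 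By universality \cite[Corollary 3.5]{MR2475974}, this class is an image of $\beta^{d}_{\bZ/n}$ (in degree $d$), or in degree $d+1$ of $\beta^{d+1}$ after taking $Y$'s connectivity into account. Condition \Cref{item:th:z2p.2.zp:res.n.n2} of \Cref{th:z2p.2.zp} (vanishing of $H^{\ge 3}$ restrictions from $\bZ/n$ to $\bZ/n^2$) then kills it. Since $\dim(X*\bZ/n^2)=d+1$ and the obstruction lives in degree $d+1$ with no higher obstructions ($Y$ need only be $(d-1)$-connected for a map on a $(d+1)$-complex, the only relevant obstruction being in $H^{d+1}$ with coefficients $\pi_d Y$), the map exists.

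\textbf{Main obstacle.} The hardest step is the careful identification of the primary obstruction for a general $E_d$-space $Y$, rather than Milnor's model. Specifically, we must show it is pulled back from a class in $H^{d+1}(\bZ/n;\pi_dY)$ via $B(\bZ/n^2)\to B(\bZ/n)$, and that the cohomology of $X*\bZ/n^2$ in degree $d+1$ relevant here is detected group-cohomologically. I would handle this by first mapping $X*\bZ/n^2\to E_{d+1}(\bZ/n^2)$ and then working on $E(\bZ/n^2)$, where $H^{d+1}_{\bZ/n^2}$ is group cohomology. The restriction-vanishing in \Cref{th:z2p.2.zp}\Cref{item:th:z2p.2.zp:res.n.n2} for $d+1\ge 3$ then applies.
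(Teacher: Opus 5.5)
Your argument is correct, but you talk yourself out of the paper's proof. The ``middle arrow'' you call unavailable comes from \Cref{th:z2p.2.zp}\Cref{item:th:z2p.2.zp:eqv.n.n2} applied with $d+1\ge 4$ in place of $d$. That gives a $(\bZ/n^2,\bZ/n)$-equivariant map $E_{d+1}(\bZ/n^2)\to E_d(\bZ/n)$. On the target side you should use $E_d(\bZ/n)\to Y$ rather than $E_{d-1}(\bZ/n)\to Y$; it exists because $E_d(\bZ/n)$ is a free $d$-dimensional complex and $Y$ is $(d-1)$-connected. The composite $X*\bZ/n^2\to E_{d+1}(\bZ/n^2)\to E_d(\bZ/n)\to Y$ is exactly the paper's proof.

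What you do instead is also valid:
\begin{itemize}
\item Since $\dim(X*\bZ/n^2)=d+1$ and $Y$ is $(d-1)$-connected, the only obstruction is the primary one in $H^{d+1}_{\bZ/n^2}(X*\bZ/n^2;\pi_dY)$.
\item Primary obstructions are natural under pullback of fibrations. Pull back along $X*\bZ/n^2\to E\bZ/n^2$, then along $B\bZ/n^2\to B\bZ/n$, using $E\bZ/n^2\times_{\bZ/n^2}Y\cong (B\varphi)^*\left(E\bZ/n\times_{\bZ/n}Y\right)$. This shows the obstruction is the image of a class in $H^{d+1}(\bZ/n;\pi_dY)$ under the map $H^{d+1}(\bZ/n;\bullet)\to H^{d+1}(\bZ/n^2;\bullet)$.
\item That map vanishes by \Cref{item:th:z2p.2.zp:res.n.n2}, since $d+1\ge 3$.
\end{itemize}
This re-runs the obstruction computation behind \Cref{th:z2p.2.zp} with $Y$ in place of the Milnor model. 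It is more self-contained and shows that only degree-$(d+1)$ vanishing is used. The cost is that you must justify the change-of-groups naturality of the primary obstruction yourself, whereas the paper's formal composition gets it from the theorem.

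Two clean-ups:
\begin{itemize}
\item Step 1 is a dead end and should be deleted.
\item Your aside that the obstruction is ``an image of $\beta^d_{\bZ/n}$ (in degree $d$)'' is wrong: it lives in degree $d+1$. Since \Cref{item:th:z2p.2.zp:res.n.n2} kills \emph{all} coefficient modules, Berstein--Schwarz universality is not needed at all.
\end{itemize}
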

\begin{proof}
  The equivalence is provided by \Cref{pr:eqvr.map.nullhot}. As to existence: $E_{d+1}\bZ/n^2$ and $Y$ are respectively \emph{$(d+1)$-} and \emph{$d$-universal} (\cite[\S 19.2]{steen_fib_1999} and \cite[Lemma 2.3]{MR77932}), being $d$- and $(d-1)$-connected; this provides equivariant maps
  \begin{equation*}
    X*\bZ/n^2
    \xrightarrow{\quad}
    E_{d+1}\bZ/n^2
    \quad\text{and}\quad
    E_d\bZ/n
    \xrightarrow{\quad}
    Y.
  \end{equation*}
  A $(\bZ/n^2,\bZ/n)$-equivariant map \Cref{eq:em.2.em1} will bridge the two, resulting in the sought-after equivariant composition.
\end{proof}

The techniques featuring above suffice to characterize those compact abelian groups for which \cite[Conjecture 2.3, Type 1]{MR3469159} fails to hold.

\pf{th:cj1.holds.when}
\begin{th:cj1.holds.when}
  We have
  \begin{equation*}
    \text{\Cref{item:th:cj1.holds.when:prod.3sph}}
    \xRightarrow{\quad\text{formal}\quad}
    \text{\Cref{item:th:cj1.holds.when:prod.sph}}
    \xRightarrow{\quad\text{formal}\quad}
    \text{\Cref{item:th:cj1.holds.when:cpct}}
    \xRightarrow{\quad\text{\cite[Proposition 4.1]{MR4523296}}\quad}
    \text{torsion-free}.
  \end{equation*}
  The equivalence \Cref{item:th:cj1.holds.when:tors.fr} $\iff$ \Cref{item:th:cj1.holds.when:p.adic} is standard structure theory (the forward implication carrying the substance): $\bG$ being torsion-free, the discrete Pontryagin dual $\wh{\bG}$ is \emph{divisible} \cite[Definition post Proposition 16]{mor-pont} by \cite[Theorem 3.1]{mor-pont} and hence 
  \begin{equation}\label{eq:ghat.decomp}
    \begin{gathered}
      \wh{\bG}
      \cong
      \bQ^{\oplus S}
      \oplus
      \bigoplus_{\text{primes }p}\left(\bZ/p^{\infty}\right)^{\oplus S_p}
      \quad
      \xRightarrow{\quad}
      \quad
      \bG
      \cong
      \wh{\bQ}^{S}
      \times
      \prod_{p}\bZ_p^{S_p}\\
      \text{sets }S,S_p
      ,\quad
      \bZ/p^{\infty}:=\left\{\text{$p$-power roots of unity}\right\}
    \end{gathered}
  \end{equation}
  by \cite[Theorem 4]{kap}, with the connected $\wh{\bQ}^S$ factor absent for profinite $\bG$. It thus remains to prove
  \begin{itemize}[wide]
  \item \Cref{item:th:cj1.holds.when:p.adic} $\Rightarrow$ \Cref{item:th:cj1.holds.when:prod.sph};

  \item and \Cref{item:th:cj1.holds.when:cpct} $\Rightarrow$ $\bG$ is profinite. 
  \end{itemize}
  The former is immediate, given the proof of \Cref{cor:higher.spheres}: $\bG\cong \prod_{p\in \cP}\bZ_p$ for some multiset $\cP$ of primes, so that
  \begin{equation*}
    \bG\cong \projlim_{\lambda\in \bZ_{\ge 0}^{\cP}}\left(\bG_{\lambda}:=\prod_{\cP}\bZ/p^{\lambda(p)}\right) 
  \end{equation*}
  with $\bZ_{\ge 0}^{\cP}$ ordered by pointwise domination (i.e. $\lambda\le \lambda'$ precisely when $\lambda(p)\le \lambda'(p)$ for all $p\in \cP$). One choice for the requisite actions $Y\circlearrowleft \bG_{\lambda}$ are the products of the individual $\bS^{2n-1}\circlearrowleft \bZ/p^m$ featuring in \Cref{cor:higher.spheres}.

  As to \Cref{item:th:cj1.holds.when:cpct} implying profiniteness, \Cref{eq:ghat.decomp} and \Cref{item:th:cj1.holds.when:cpct}'s permanence after passing to subgroups reduces the problem to proving \cite[Conjecture 2.3, Type 1]{MR3469159} for the specific compact group $\left(\wh{\bQ},+\right)$; this, we relegate to \Cref{th:qhat}. 
\end{th:cj1.holds.when}

\begin{remark}\label{re:s1.not.zpn}
  \cite[\S 4]{MR3803650} indicates the flaw in the original attempt \cite[Proposition 2.6]{1604.02173v2} at resolving the conjecture in its full generality: faulty use is made of a slightly misleading \cite[Claim 2.6]{zbMATH01746247} (misleading in that its later proof, in the form of \cite[Claim 4.9]{zbMATH01746247}, functions only for elementary abelian $p$-groups, whereas the claim refers to $p$-groups period).

  The crux of the argument supporting \Cref{th:qhat} is that that same proof strategy (to be recalled in sketch form presently) does function for $\wh{\bQ}$ (as opposed to $\bZ_p$); this is because analogues of the two referenced claims do hold for $\bS^1$, by contrast to $\bZ/p^n$, $n\ge 2$.
\end{remark}

\begin{theorem}\label{th:qhat}
  For a free action $X\circlearrowleft \left(\bG:=\left(\wh{\bQ},+\right)\right)$ on a non-empty compact Hausdorff space, there are no equivariant maps $X*\bG\to X$. 
\end{theorem}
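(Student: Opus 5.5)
The plan is to run the Chern-class/index argument of \cite[Proposition 2.6]{1604.02173v2}, which \Cref{re:s1.not.zpn} says fails for $\bZ/p^n$ but works when the structure group behaves like $\bS^1$. Write $\bG=\wh{\bQ}\cong\varprojlim_n \bS^1$, with transition maps $z\mapsto z^m$ along divisibility, and $K_n:=\ker(\bG\to\bS^1)$ for the $n$-th projection. By \Cref{pr:eqvr.map.nullhot} (or \cite[Lemma 4.1]{MR4523296}) it suffices to rule out a nullhomotopic $\bG$-equivariant $h\colon X\to X$; equivalently, to rule out a $\bG$-map from the cone-type space $X*\bG$ to $X$.

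\textbf{Step 1: a rational characteristic class.} For a free $\bG$-action on a compact Hausdorff $P$ with $B:=P/\bG$, each $P/K_n\to B$ is a principal $\bS^1$-bundle with Čech Chern class $c_n\in \check H^2(B;\bZ)$, and these satisfy $c_n=m\,c_{nm}$. I would define
\begin{equation*}
  c(P):=c_1\otimes 1\in \check H^2(B;\bQ),
\end{equation*}
noting that $c(P)$ is rationally divisible in a compatible way; morally $B\wh{\bQ}$ has the shape of $K(\bQ,2)$. The class is natural under equivariant maps and vanishes for trivial bundles. Compactness of $B$ and Milnor's construction give a $\bG$-map $P\to E_k\bG$ for some finite $k$. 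Since $\check H^{2k+2}(B_k\bG;\bQ)=0$ by dimension/shape reasons (the finite joins of the solenoid are $2k+1$-dimensional in the Čech sense), $c(P)^{k+1}=0$. So the \emph{index}
\begin{equation*}
  \operatorname{ind}(P):=\min\{k : c(P)^{k}=0\}
\end{equation*}
is finite and nonzero, because $P\ne\emptyset$ forces $1\ne 0$. It is monotone: a $\bG$-map $P\to P'$ pulls $c(P')$ back to $c(P)$, so $\operatorname{ind}(P)\le\operatorname{ind}(P')$.

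\textbf{Step 2: joining with $\bG$ raises the index.} This is the $\bS^1$-analogue of \cite[Claims 2.6/4.9]{zbMATH01746247}, and I expect it to be the main obstacle. The claim is
\begin{equation*}
  \operatorname{ind}(X*\bG)=\operatorname{ind}(X)+1.
\end{equation*}
\begin{itemize}
\item The base $Z:=(X*\bG)/\bG$ is the double mapping cylinder of $B\leftarrow X\to \ast$, with $B:=X/\bG$.
\item Mayer--Vietoris (Čech, rational) for this decomposition, combined with the rational Gysin sequence of $X/K_n\to B$, would show the following. Suppose $c(X*\bG)^k$ restricts to $0$ on $B$ (it restricts to $c(X)^k$ there). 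Then it lifts to a class coming from $\check H^{2k-1}(X;\bQ)$ via the connecting map.
\item Multiplying once more by $c$ should then kill it. Rationally the Gysin map $\check H^{2k-1}(X)\to \check H^{2k-2}(B)$ intertwines with multiplication by $c$, with no torsion obstruction of the kind that fails for $\bZ/p^n$.
\item This yields $c(X*\bG)^{\operatorname{ind}X+1}=0$, but it is the reverse inequality that the contradiction needs.
\item For that inequality I would instead use the cone description $Z\supset B$ with $Z/B\simeq \Sigma(X_+)$-type quotient, and show that $c^{\operatorname{ind}X}$ does not vanish on $Z$. Its restriction to the open part near the cone point recovers the Thom/Euler class of the trivialised bundle, which is nonzero rationally.
\end{itemize}
Concretely, I would first prove the computation for $\bS^1$ (where $X*\bS^1$ carries the Chern class $c\otimes 1+1\otimes e$ in the Gysin-type model). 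I would then pass to $\wh{\bQ}$ by taking the colimit over $n$ of the $\bS^1$-bundles $X/K_n$, using that rational Čech cohomology commutes with these inverse limits of compacta.

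\textbf{Step 3: conclusion.} An equivariant $X*\bG\to X$ gives $\operatorname{ind}(X)+1=\operatorname{ind}(X*\bG)\le \operatorname{ind}(X)$, a contradiction since $\operatorname{ind}(X)<\infty$ by Step 1. The delicate points are:
\begin{itemize}
\item finiteness of the index for arbitrary (possibly infinite-dimensional) compact $X$, which comes from the finite-stage Milnor map;
\item exactly the non-vanishing half of Step 2, which is where $\wh{\bQ}$ differs from $\bZ_p$. There, rational coefficients make the relevant $\bZ/p^n$-torsion phenomena disappear.
\end{itemize}
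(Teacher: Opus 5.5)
Your route differs from the paper's. There, Ellwood's eigenfunctions \cite{ell} give a nowhere-vanishing $\bG$-map $X\to\chi_1^{\oplus n}$, iterating $X*\bG\to X$ gives $E_n\bG\to X$, and \Cref{pr:no.triv.char} rules this out by a top-Chern-class computation on universal spaces. The identity $\operatorname{ind}(X*\bG)=\operatorname{ind}(X)+1$ you aim for is in fact true, but your proposal does not prove the half you need. Your Mayer--Vietoris/Gysin bullets give only $\operatorname{ind}(X*\bG)\le\operatorname{ind}(X)+1$. The sketched reverse inequality fails as written: a neighbourhood of the cone point of $Z$ is contractible, so $c^k$ restricts to zero there and nothing nonzero is ``recovered''.

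The missing idea is the Thom space. Write $c_B:=c(X)$ and $c_Z:=c(X*\bG)$. For $\bS^1$, $Z=(X*\bS^1)/\bS^1$ is the mapping cone of $X\to B$, i.e.\ the Thom space of $L:=X\times_{\bS^1}\bC$. Its Chern class is $\pm$ the Thom class $U$: fibrewise one sees the Hopf bundle over $(\bS^1*\bS^1)/\bS^1\cong\bC\bP^1$. Then $U\cup U=U\cup p^*e(L)$ for $p\colon D(L)\to B$ gives $c_Z^k=\pm U\cup p^*c_B^{k-1}$, which is nonzero iff $c_B^{k-1}\ne0$ by the Thom isomorphism.

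For $\wh{\bQ}$, note that $(X*\bG)/K_1$ is \emph{not} $(X/K_1)*\bS^1$. There is only an $\bS^1$-map between them, covering the map $Z\to Z_1$ of mapping cones of $X\to B$ and $X/K_1\to B$, and you must show this map is a rational \v{C}ech isomorphism. Compare the Gysin sequences of $X/K_{nm}\to X/K_n$: the fibre degree is $m$ and $c_n=mc_{nm}$, so the vertical maps on the $\check H^*(B;\bQ)$ terms are $1$ or $m$. The five lemma then makes each transition a rational isomorphism, and continuity gives $\check H^*(X/K_1;\bQ)\cong\check H^*(X;\bQ)$. The five lemma on the long exact sequences of $(Z,B)\to(Z_1,B)$, with $\check H^{*}(Z,B)\cong\tilde{\check H}^{*-1}(X)$, finishes. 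This is the actual content of your ``pass to the colimit'' remark; without it Step 2 is only an expectation.

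Step 1 also rests on a false claim. A $\bG$-map $P\to E_k\bG$ forces local triviality: the invariant open sets where a given join coordinate is nonzero map equivariantly to $\bG$. Free actions of the non-Lie group $\wh{\bQ}$ on compacta need not be locally trivial. Over the Hawaiian earring, take trivial circle bundles linked by $(b,z)\mapsto(b,\phi_n(b)z^{n+1})$, with $\phi_n$ winding once around the $n$-th loop; winding-number bookkeeping excludes local sections at the wedge point.

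The repair is easy, since your index only involves $c_1$. The map $P/K_1\to B$ comes from a free action of the Lie group $\bS^1$, hence is locally trivial. So it maps equivariantly to some $E_k\bS^1\cong\bS^{2k+1}$, giving $c_1^{k+1}=0$; alternatively, use the Ellwood functions as the paper does.

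With both repairs your argument is complete and somewhat more self-contained than the paper's. It avoids \cite{ell} and the reduction to $E_n\bS^1$ underlying \Cref{pr:no.triv.char}, and yields an exact index shift for every free $X$. The paper, by contrast, needs only the Chern-class computation on the universal spaces.
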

\begin{proof}
  The argument runs as in (the valid portion of) the attempted \cite[proof of Theorem 2.1]{1604.02173v2}, with the appropriate alterations. The action being free, \cite[Theorem 2.9]{ell} ensures the existence of continuous $\chi_1$-eigenfunctions $f_i\in C(X)$ with
  \begin{equation*}
    \left\|\sum_{i=1}^n f_i g_i-1\right\|
    <\text{small }\varepsilon>0
    ,\quad
    \begin{gathered}
      g_i\in C(X)\\
      \bS^1
      \xrightarrow{\quad\chi_n:=(-)^n\quad}
      \bS^1
    \end{gathered}.
  \end{equation*}
  Iteration gives a $\bG$-equivariant map
  \begin{equation*}
    E_n\bG:= \bG^{*(n+1)}
    \lhook\joinrel\xrightarrow{\quad}
    X* E_n\bG
    \xrightarrow{\quad}
    X,
  \end{equation*}
  hence also a nowhere-vanishing $\bG$-map $E_n \bG\to \chi_1^{\oplus n}$ by restricting the functions $f_i$, $1\le i\le n$ along that composition. This violates \Cref{pr:no.triv.char}, finishing the proof. 
\end{proof}

\begin{proposition}\label{pr:no.triv.char}
  For an $n$-dimensional ($n\in \bZ_{>0}$) complex representation $\rho:\bG\circlearrowright E$ of $\bG:=\wh{\bQ}$ with no trivial summands, every $\bG$-equivariant map $E_n\bG:= \bG^{*(n+1)}\xrightarrow{f}E$ has a zero.
\end{proposition}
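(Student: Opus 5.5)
Throughout, $\wh{\bQ}$ denotes the Pontryagin dual of the discrete group $\bQ$. It is a connected compact abelian group (a solenoid) with character group $\bQ$. Every finite-dimensional complex representation splits into characters, so $E\cong\bigoplus_{k=1}^n \chi_{q_k}$ with $q_k\in\bQ\setminus\{0\}$, since no summand is trivial.

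The plan is to reduce to the circle. Clearing denominators gives a single character $\chi:\bG\to\bS^1$ through which every $\chi_{q_k}$ factors as $z\mapsto z^{m_k}$. Concretely, choose $M$ with $Mq_k\in\bZ$ for all $k$, let $\chi:=\chi_{1/M}$, and set $m_k:=Mq_k\in\bZ\setminus\{0\}$. Then $\chi$ is surjective, because the image is a closed connected nontrivial subgroup of $\bS^1$, and its kernel $\bK$ is a compact subgroup. Suppose, for contradiction, that $f:E_n\bG\to E$ is nowhere vanishing.

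\textbf{Pass to the quotient by $\bK$.} Since $\bK$ acts trivially on $E$, $f$ factors through $E_n\bG/\bK=\left(\bG^{*(n+1)}\right)/\bK$. The map $\chi$ induces a $\bS^1$-equivariant map $\bG^{*(n+1)}/\bK\to(\bS^1)^{*(n+1)}$ componentwise, but this goes the wrong way for what we need. Instead, we will argue cohomologically in the style of Borsuk-Ulam via the Euler class. Nonvanishing $f$ gives an $\bS^1$-equivariant map $E_n\bG/\bK\to E\setminus 0\simeq \bS^{2n-1}$ with $\bS^1=\bG/\bK$ acting freely on the source (freeness holds because $\bG$ acts freely on the join). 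Divide by $\bS^1$ and normalize. This yields a section of the sphere bundle of the complex vector bundle $V:=(E_n\bG/\bK)\times_{\bS^1}E$ over $B:=E_n\bG/\bG=B_n\bG$. Hence the top Chern class $c_n(V)=\prod_k m_k\,c_1(L)^n$ vanishes in $H^{2n}(B;\bZ)$, where $L$ is the line bundle associated with $\chi$. The rational version then says $c_1(L)^n=0$ in $H^{2n}(B_n\bG;\bQ)$.

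\textbf{Contradiction from the cohomology of $B\wh{\bQ}$.} $H^*(B\wh{\bQ};\bQ)\cong\bQ[c]$ with $c$ the image of $c_1(L)$; this follows from \v{C}ech cohomology of $B\bG$ as a limit of $B\bS^1$'s under the degree maps, which are isomorphisms rationally. The pair $(B\bG,B_n\bG)$ is relatively $n$-connected, but that gives only injectivity up to degree $n$, not $2n$. Getting injectivity in degree $2n$ is the main obstacle. I expect to handle it by exploiting the fact that $\bG$ is connected. Then $E_n\bG=\bG^{*(n+1)}$ is $(n+1)$-fold joined with connected pieces, hence roughly $(2n)$-connected in the \v{C}ech/shape sense. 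Each join with a connected space raises connectivity by two, as in the $\bS^1$ case where $(\bS^1)^{*(n+1)}=\bS^{2n+1}$. Consequently $H^{2n}(B\bG;\bQ)\to H^{2n}(B_n\bG;\bQ)$ is injective via the Leray/Serre spectral sequence of $E_n\bG\to B_n\bG$ in \v{C}ech cohomology. Since $c^n\neq0$ in $H^{2n}(B\bG;\bQ)$, it follows that $c_1(L)^n\ne0$ on $B_n\bG$, which contradicts the previous step.

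Alternatively, one can write $\bG$ as a cofiltered limit of circles $\bS^1$ with finite kernels and use continuity of \v{C}ech cohomology. This reduces the connectivity claim to $B_n\bS^1=\bC P^n$ twisted by finite groups, where rationally the claim is classical: $c^n\neq0$ in $H^{2n}(\bC P^n)$.
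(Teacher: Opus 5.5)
Your reduction is sound. The map $f$ descends to $E_n\bG/\bK$, and a nowhere-vanishing section of $V=E_n\bG\times_{\bG}E\cong\bigoplus_k L^{\otimes m_k}$ over $B_n\bG$ forces $\prod_k m_k\,c_1(L)^n=0$. This must be read in \v{C}ech cohomology throughout, since the continuity you need next fails for singular cohomology of solenoidal spaces like $B_n\bG$.

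The argument you present as primary for $c_1(L)^n\neq 0$ is not a proof as written. You transplant the circle-case step (injectivity of $H^{2n}(B\bS^1)\to H^{2n}(B_n\bS^1)$) to $\wh{\bQ}$. But $B\bG=\bigcup_m B_m\bG$ is not compact, so continuity of \v{C}ech cohomology does not justify $H^*(B\bG;\bQ)\cong\bQ[c]$. The Leray spectral sequence for a bundle with solenoid fibre is asserted rather than run.

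None of this is needed. Your ``alternative'' paragraph is the actual proof and should be made the main argument, stated precisely:
\begin{itemize}
\item Joins and orbit spaces of compact groups commute with cofiltered limits of compact spaces (continuous bijections from compact to Hausdorff spaces).
\item Write $\bG=\varprojlim_N \bS^1_N$, with $N$ ordered by divisibility, bonding maps $z\mapsto z^{N'/N}$, and $\chi$ the projection to the stage $N=M$. Then $B_n\bG\cong\varprojlim_N \bC P^n$.
\item The bonding maps act on $H^{2n}(\bC P^n;\bZ)\cong\bZ$ by multiplication by $(N'/N)^n\neq 0$.
\item $L$ is pulled back from the Hopf line bundle at stage $M$. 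So $c_1(L)^n$ is the image of a generator in $\check H^{2n}(B_n\bG;\bZ)\cong\varinjlim_N\bZ\cong\bQ$, hence nonzero.
\item This group is torsion-free, so $\prod_k m_k\,c_1(L)^n\neq 0$.
\end{itemize}
No ``twisting by finite groups'' enters: the limit is over copies of $\bC P^n$ itself, and finite kernels only appear in the bonding homomorphisms.

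With that repair, your route is correct and genuinely different from the paper's. The paper first converts the $\bG$-map into a nowhere-vanishing $\bS^1$-equivariant map $E_n\bS^1\to E$ for a suitable circle quotient of $\bG$, using a cited lemma. It then runs the classical obstruction on CW complexes: $E_n\bS^1\cong\bS^{2n+1}$, injectivity of $H^{2n}(\bC P^\infty;\bZ)\to H^{2n}(\bC P^n;\bZ)$, and $c_n=\alpha^n\prod_i m_i\neq 0$. You instead keep the non-CW base $B_n\wh{\bQ}$ and let continuity of \v{C}ech cohomology do, at the cohomological level, the passage from $\wh{\bQ}$ to circles that the cited lemma does at the level of maps.

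Your version is self-contained and needs only the easy direction (nonvanishing section $\Rightarrow$ vanishing top Chern class) over a compact Hausdorff base. The price is working with Chern classes in \v{C}ech cohomology and the routine check $B_n\bG\cong\varprojlim B_n\bS^1$. The paper's version keeps all topology on spheres and projective spaces but depends on the external reduction lemma.
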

\begin{proof}
  $\rho$ factors through a Lie quotient $\bG\xrightarrowdbl{}\bS^1$ (all such are circles for $\bG=\wh{\bQ}$) which we may (and henceforth will) assume again has no trivial summands, and \cite[Lemma 3.3]{MR3803650} provides a nowhere-vanishing $\bS^1$-equivariant map
  \begin{equation*}
    E_n \bS^1\xrightarrow{\quad} E\cong \bigoplus_{i=1}^n \chi_{m_i}
    ,\quad
    \chi_m:=(-)^m\in \wh{\bS^1}. 
  \end{equation*}
  That this is impossible will follow via the bundle-theoretic machinery employed in \cite{zbMATH01746247}, per the remainder of the present proof.

  The \emph{universal bundle} \cite[\S 14.4]{td_alg-top}
  \begin{equation*}
    \varinjlim_m E_m\bS^1=:E\bS^1
    \xrightarrowdbl{\quad\pi\quad}
    B\bS^1:=E\bS^1/\bS^1
  \end{equation*}
  restricts to $\pi_m:=\pi|_{B_m\bS^1:=E_m\bS^1/\bS^1}$, $(2m+1)$-universal because $E_m \bS^1=\left(\bS^1\right)^{*(m+1)}$ is $2m$-connected by \cite[Lemma 2.3]{MR77932} (or simply because $E_m \bS^1\cong \bS^{2m+1}$ \cite[Proposition 2.23]{rs_pl_1982}). That $2m$-connectedness then ensures precisely as in the proof of \cite[Proposition 4.5]{zbMATH01746247} that the pullback
  \begin{equation*}
    H^*\left(B\bS^1,\bZ\right)
    \xrightarrow{\quad}
    H^*\left(B_n\bS^1,\bZ\right)
    \quad
    \left(\text{dual to }B_n\bS^1\lhook\joinrel\xrightarrow{\quad}B\bS^1\right)
  \end{equation*}
  is injective on $H^{2n}$, so the top \emph{Chern class} \cite[\S 14.2]{ms-cc} of the \emph{associated vector bundle} \cite[post Example 14.1.15]{td_alg-top} $\pi[\rho]$ vanishes if and only if that of the analogue $\pi_n[\rho]$ does. And in turn, that vanishing is equivalent (\cite[Theorem, bottom p. 284]{zbMATH01746247}; effectively \cite[Theorem 12.5]{ms-cc}) to the existence of the nowhere-vanishing map $E_n \bS^1\to E$ that is the focus of the present result.
  
  In light of the preceding discussion, it remains to argue that that top universal class $c_{n}\left(\pi[\rho]\right)$ is non-trivial. This, however, is immediate: if
  \begin{equation*}
    E \circlearrowleft \bS^1:\rho
    \cong
    \bigoplus_{i=1}^n
    \chi_{m_i}
    ,\quad
    \chi_m:=(-)^m\in \wh{\bS^1}
  \end{equation*}
  then the multiplicativity \cite[(14.7)]{ms-cc} of the \emph{total Chern class} implies that
  \begin{equation*}
    c_{n}\left(\pi[\rho]\right)
    =
    \alpha^n\prod_{i=1}^n m_i
  \end{equation*}
  for the degree-2 generator $\alpha$ of $H^*\left(\bC\bP^{\infty},\bZ\right)\cong \bZ[\alpha]$ \cite[Theorem 14.5]{ms-cc}. The $m_i$ being assumed non-zero, we are done.
\end{proof}

We record also the following consequence of \Cref{th:qhat}.

\begin{corollary}\label{cor:bdh1.fail.2.profin}
  If \cite[Conjecture 2.3, Type 1]{MR3469159} fails for the compact group $\bG$, then that group must be profinite. 
\end{corollary}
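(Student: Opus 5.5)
We argue by contraposition: assume $\bG$ is a compact group that is not profinite, i.e. its identity component $\bG_0$ is non-trivial, and show that the conjecture holds for $\bG$. The plan is to locate a closed subgroup isomorphic to either $\bS^1$ or $\wh{\bQ}$, to which \Cref{pr:no.triv.char}/\Cref{th:qhat} (or its circle analogue) apply. Then we transfer the conclusion back to $\bG$ through the permanence of counterexamples under passage to closed subgroups, already used in the proof of \Cref{th:cj1.holds.when}.

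\textbf{Step 1: permanence.} Suppose $X\circlearrowleft\bG$ is free, $X\ne\emptyset$ compact Hausdorff, and $X*\bG\to X$ is equivariant. For a closed subgroup $\bH\le\bG$, the restricted action $X\circlearrowleft\bH$ is still free. Composing the inclusion $X*\bH\hookrightarrow X*\bG$ with the given map yields an $\bH$-equivariant map $X*\bH\to X$. Equivalently, by \Cref{pr:eqvr.map.nullhot}, a nullhomotopic $\bG$-equivariant self-map of $X$ is in particular $\bH$-equivariant. So a counterexample for $\bG$ restricts to a counterexample for every non-trivial closed subgroup.

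\textbf{Step 2: finding the subgroup.} Since $\bG_0$ is a non-trivial compact connected group, it contains a non-trivial element $g$. The closure $\overline{\langle g\rangle}$ is a compact monothetic abelian group; we would rather work with a maximal torus or a solenoidal subgroup. If $\bG_0$ is non-abelian, it contains non-trivial tori, hence a closed copy of $\bS^1$. If $\bG_0$ is abelian, it is compact connected abelian, with torsion-free non-trivial discrete dual. Picking a non-zero element of the dual gives a rank-one subgroup $A\le\wh{\bG_0}$ with torsion-free quotient (its pure hull), so that $\wh A$ is a quotient of $\bG_0$, not a subgroup. We therefore dualize the other way: a compact connected abelian group surjects onto something, but we need an injection. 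The cleaner route is to use that every compact connected group is either trivial or contains a closed subgroup isomorphic to $\bS^1$ or to a one-dimensional solenoid. Indeed, take the closure $\bH:=\overline{\langle g\rangle}$ for $g$ in a suitable one-parameter subgroup $\exp(\bZ\cdot\text{ something})$, or more directly take the closure of a non-trivial one-parameter subgroup $\bR\to\bG_0$, which exists because $\bG_0$ is a non-trivial connected pro-Lie group. That closure is compact connected abelian, and its dual embeds in $\bR$ as a discrete group; this still allows higher rank. So we take instead the image of an arc component and then a rank-one quotient of its dual, i.e. a one-dimensional closed subgroup. Any one-dimensional compact connected abelian group has a rank-one torsion-free dual $D\le\bQ$, so it is $\bS^1$ (if $D\cong\bZ$) or a solenoid $\wh D$.

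\textbf{Step 3: the conjecture for solenoids and circles.} For $\bS^1$, the proof of \Cref{th:qhat} runs verbatim (that proof in fact reduced to $\bS^1$-quotients). For a general solenoid $\wh D$ with $\bZ\le D\le\bQ$, the same argument applies, since all its non-trivial Lie quotients are circles, exactly as for $\wh{\bQ}$ in \Cref{pr:no.triv.char}. Combining with Step 1 contradicts the existence of a counterexample for $\bG$.

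\textbf{Main obstacle.} The main obstacle is Step 2: producing a closed one-dimensional connected subgroup rather than a quotient. We would first try to handle it via Lie theory for the torus case and, for the abelian solenoidal case, by noting that any non-trivial compact connected abelian group contains a closed subgroup whose dual is a rank-one torsion-free group. This holds by duality: closed subgroups correspond to quotients of the dual, and a torsion-free abelian group of positive rank has a rank-one torsion-free quotient, namely modulo a maximal pure subgroup of corank one.
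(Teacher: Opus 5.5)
Your proof is correct, but it takes a different route from the paper's.

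\textbf{The paper's route.} It first invokes \cite[Proposition 4.1]{MR4523296}: the conjecture holds whenever $\bG$ has non-trivial torsion, so one may assume $\bG$ torsion-free. Then $\bG_0$ is torsion-free, hence abelian, hence has divisible torsion-free dual. So $\bG_0\cong\wh{\bQ}^S$ contains a closed copy of $\wh{\bQ}$, and \Cref{th:qhat} plus permanence under closed subgroups finishes.

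\textbf{Your route.} You skip the torsion reduction. Instead you extract a one-dimensional closed connected subgroup $\wh D$ ($0\ne D\le\bQ$) from an arbitrary non-trivial $\bG_0$, and then need the conjecture for every such $\wh D$.

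The valid core of your Step 2 is the duality argument in your final paragraph. Since $\wh{\bG_0}$ is non-zero and torsion-free, a non-zero linear functional on $\wh{\bG_0}\otimes\bQ$ restricts to a non-zero homomorphism $\wh{\bG_0}\to\bQ$ with image $D$ and pure kernel $P$. The closed subgroup $P^{\perp}\le\bG_0$ then has dual $D$. The detours through one-parameter subgroups and arc components should simply be deleted. So should the inaccurate remark that the dual embeds in $\mathbb{R}$ as a discrete group.

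Your Step 3 extends \Cref{th:qhat} beyond what the paper states. This is legitimate: the proofs of \Cref{th:qhat} and \Cref{pr:no.triv.char} use only a non-trivial character and the fact that all non-trivial Lie quotients are circles, and both hold for every $\wh D$. It is also avoidable. For $D\ne\bQ$ the group $D$ is not divisible, so $\wh D$ has non-trivial torsion and is already covered by \cite[Proposition 4.1]{MR4523296}. That leaves only $D=\bQ$, which is exactly where the paper's reduction lands.

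\textbf{Trade-off.} Your route replaces the external torsion theorem and the structure theory (abelianness of torsion-free compact connected groups, classification of divisible groups) with an elementary duality lemma. The cost is an unstated but routine extension of \Cref{th:qhat} to all one-dimensional solenoids.
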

\begin{proof}
  We know \cite[Proposition 4.1]{MR4523296} the conjecture is valid for groups having non-trivial torsion, so we can assume $\bG$ torsion-free. Were it not profinite, it would then have non-trivial, torsion-free identity connected component $\bG_0$. The latter would then necessarily be abelian, as argued in the course of the proof of \cite[Proposition 1.10]{cdt_be-hs}; \Cref{th:qhat} thus applies to conclude that $\bG_0=\{1\}$.
\end{proof}


\addcontentsline{toc}{section}{References}

\def\polhk#1{\setbox0=\hbox{#1}{\ooalign{\hidewidth
  \lower1.5ex\hbox{`}\hidewidth\crcr\unhbox0}}}


\Addresses

\end{document}